\newif\ifsvjour
	\journalname{Graphs and Combinatorics}
	\newtheorem{theorem}{Theorem}[section]
	\newtheorem{lemma}[theorem]{Lemma}
	\newtheorem{proposition}[theorem]{Proposition}
\newcommand{\cgF}{\mathcal{F}}
\newcommand{\cgR}{\mathcal{R}}
\newcommand{\cgI}{\mathcal{I}}
\newcommand{\Min}{\operatorname{Min}}
\newcommand{\Max}{\operatorname{Max}}
\newcommand{\Inc}{\operatorname{Inc}}
\newcommand{\width}{\operatorname{width}}
\newcommand{\height}{\operatorname{height}}
\newcommand{\Idim}{\operatorname{Idim}}
\newcommand{\fdim}{\operatorname{dim}^*}
\newcommand{\fd}{\operatorname{fdim}}
\begin{document}

\ifsvjour

\title{Forcing Posets with Large Dimension to Contain Large Standard Examples}

\author{Csaba Bir\'o \and Peter Hamburger \and Attila P\'or \and William T. Trotter}

\institute{C. Bir\'o \at Department of Mathematics, University of Louisville, Louisville, KY 40292, U.S.A.\\ \email{csaba.biro@louisville.edu}
	\and
	P. Hamburger \at Department of Mathematics, Western Kentucky University, Bowling Green, KY 42101, U.S.A.\\ \email{peter.hamburger@wku.edu}
	\and
	A. P\'or \at Department of Mathematics, Western Kentucky University, Bowling Green, KY 42101, U.S.A.\\ \email{attila.por@wku.edu}
	\and
	W. T. Trotter \at School of Mathematics, Georgia Institute of Technology, Atlanta, GA 30332, U.S.A.\\ \email{trotter@math.gatech.edu}}

\subclass{06A07\and 05C35}

\keywords{poset\and dimension\and width\and standard example}

\else

\title[LARGE STANDARD EXAMPLES]{Forcing Posets with Large Dimension\\
  to Contain Large Standard Examples}
  
\author[C.~BIR\'{O}]{Csaba Bir\'{o}}

\address{Department of Mathematics\\
 University of Louisville\\
 Louisville, Kentucky 40292\\
 U.S.A.}

\email{csaba.biro@louisville.edu}

\author[P.~HAMBURGER]{Peter Hamburger}

\address{Department of Mathematics\\
 Western Kentucky University\\
 Bowling Green, Kentucky 42101\\
 U.S.A.}

\email{peter.hamburger@wku.edu}

\author[A.~P\'{O}R]{Attila P\'{o}r}

\address{Department of Mathematics\\
 Western Kentucky University\\
 Bowling Green, Kentucky 42101\\
 U.S.A.}

\email{attila.por@wku.edu}

\author[W.~T.~TROTTER]{William T. Trotter}

\address{School of Mathematics\\
  Georgia Institute of Technology\\
  Atlanta, Georgia 30332\\
  U.S.A.}

\email{trotter@math.gatech.edu}

\subjclass[2010]{06A07, 05C35}

\keywords{Poset, dimension, width, standard example}

\fi

\begin{abstract}
The dimension of a poset $P$, denoted 
$\dim(P)$, is the least positive integer $d$ for which $P$ is the 
intersection of $d$ linear extensions of $P$.  The maximum dimension 
of a poset $P$ with $|P|\le 2n+1$ is $n$, provided $n\ge2$, and this inequality
is tight when $P$ contains the standard example $S_n$.
However, there are posets with large dimension that do not 
contain the standard example $S_2$.  Moreover, for each fixed 
$d\ge2$, if $P$ is a poset with $|P|\le 2n+1$ and $P$ does not 
contain the standard example $S_d$, then 
$\dim(P)=o(n)$.  Also, for large $n$, there is a poset $P$ with
$|P|=2n$ and $\dim(P)\ge (1-o(1))n$ such that the largest $d$ so that
$P$ contains the standard example $S_d$ is $o(n)$. 
In this paper, we will show that 
for every integer $c\ge1$, there is an integer $f(c)=O(c^2)$ so that 
for large enough $n$, if $P$ is a poset with $|P|\le 2n+1$ and $\dim(P)\ge 
n-c$, then $P$ contains a standard example $S_d$ with $d\ge n-f(c)$.  
From below, we show that $f(c)=\Omega(c^{4/3})$.  On the other hand, 
we also prove an analogous result for fractional dimension, and in 
this setting $f(c)$ is linear in $c$.  Here the result is best possible 
up to the value of the multiplicative constant.
\end{abstract}

\maketitle

\section{Introduction}\label{sec:intro}

When $G$ is a graph, let $\chi(G)$ denote the \textit{chromatic
number} of $G$, and let $\omega(G)$ denote the \textit{maximum clique
size} of $G$.  As is well known, there are triangle-free graphs
(graphs with $\omega(G)\le 2$) with large chromatic number.
Moreover, just by analyzing the behavior of the Ramsey number 
$R(3,k)=\Theta(k^2/\log k)$, it follows that there
are triangle-free graphs on $n$ vertices with chromatic number 
$\Omega(\sqrt{n/\log n})$.  However, when a graph on $n$ vertices
has chromatic number close to $n$, it must have a large clique.
We state for emphasis the following self-evident 
proposition\footnote{Bir\'{o}, F\"{u}redi and Jahanbekam~\cite{bib:BiFuJa}
have studied the question of forcing large cliques in graphs in far greater
detail than this elementary proposition.  But this simple result suffices
in establishing a parallel line of thought in graph theory.}.

\begin{proposition}\label{pro:graph}
Let $c$ be a positive integer.  If $n>2c$ and $G$ is a graph on
$n$ vertices with $\chi(G)\ge n-c$, then $\omega(G)\ge n-2c$.
\end{proposition}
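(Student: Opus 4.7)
The plan is to argue directly from the minimality of an optimal proper coloring. Fix a proper coloring of $G$ using exactly $\chi(G) \ge n-c$ colors. I will first use a counting argument to show that this coloring must contain many singleton color classes, and then use a one-step exchange argument to show that any two such singletons must be adjacent in $G$, so the singletons themselves form a large clique.

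For the first step, write $p = \chi(G)$ and let $s$ denote the number of singleton color classes. Each non-singleton class contributes at least two vertices to $V(G)$, so
\[
n \;\ge\; s + 2(p - s) \;=\; 2p - s,
\]
which rearranges to $s \ge 2p - n \ge 2(n-c) - n = n - 2c$. The hypothesis $n > 2c$ guarantees $s > 0$, so singleton classes genuinely exist.

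For the second step, suppose $\{u\}$ and $\{v\}$ are two distinct singleton color classes with $uv \notin E(G)$. Then $\{u,v\}$ is independent, so merging these two classes yields a proper coloring with $p-1$ colors, contradicting $\chi(G) = p$. Hence every pair of singleton vertices is adjacent in $G$, and the $s \ge n-2c$ singletons form a clique, giving $\omega(G) \ge n-2c$.

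There is no genuine obstacle here — the proof is just a double-counting of color classes combined with a one-step Kempe-type exchange. The only detail requiring care is ensuring that singleton classes actually exist, which is precisely why the statement demands the strict inequality $n > 2c$ rather than $n \ge 2c$.
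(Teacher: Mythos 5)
Your proof is correct. The paper states Proposition~\ref{pro:graph} without proof (it is labelled self-evident, with a footnote pointing to~\cite{bib:BiFuJa} for deeper results), and your argument---counting that an optimal coloring with $\chi(G)\ge n-c$ colors has at least $2\chi(G)-n\ge n-2c$ singleton classes, then noting that two nonadjacent singletons could be merged to save a color, so the singletons form a clique---is precisely the standard argument the authors leave implicit. One cosmetic remark: the hypothesis $n>2c$ is not really what your argument needs (the singletons form a clique of size at least $n-2c$ whether or not that quantity is positive, and the conclusion is vacuous when $n-2c\le0$); it merely makes the statement nontrivial.
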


This paper is concerned with analogous results for finite partially
ordered sets (posets).

\subsection{Posets and Dimension}

We assume familiarity with basic notation and terminology for 
posets, including chains and antichains; comparable
and incomparable elements; minimal and maximal elements; and linear
extensions.  For readers who seek additional
background material on posets, Trotter's book~\cite{bib:Trot-Book}
is a good reference.

We denote by $|P|$ the number of elements of $P$ and
we frequently refer to elements of $P$ as points. Recall that the 
\textit{height} of a poset $P$, denoted $\height(P)$, is the 
maximum size of a chain in $P$, while the \textit{width} of $P$, 
denoted $\width(P)$, is the maximum size of an antichain in $P$. As 
there is no completely standard notation for this relation, here 
we will write $x\parallel y$ in $P$ when $x$ and $y$ are distinct and 
incomparable points in a poset $P$. 

A family $\cgR=\{L_1,L_2,\dots,L_d\}$ of linear
extensions of a poset $P$ is called a \textit{realizer} of
$P$ if $x<y$ in $P$ if and only if $x<y$ in $L_i$ for each $i=1,2,\dots,d$.
Equivalently, a family $\cgR$ of
linear extensions of $P$ is a realizer of $P$
if and only if for every ordered pair $(x,y)$ with $x\parallel y$ in $P$,
there is some $i$ with $1\le i\le d$ with $x>y$ in $L_i$.
Dushnik and Miller~\cite{bib:DusMil} defined
the \textit{dimension} of a poset $P$, denoted $\dim(P)$, as the 
least positive integer $d$ for
which $P$ has a realizer of size $d$.  When $P$ is a poset,
the \textit{dual} of $P$ is the poset $Q$ with the same
ground set as $P$ with $x>y$ in $Q$ if and
only if $x<y$ in $P$.  The following basic properties of dimension 
are self-evident.

\begin{enumerate}
\item  $\dim(P)=1$ if and only if $P$ is a chain.
\item If $Q$ is a subposet of $P$, then $\dim(Q)\le\dim(P)$.
\item If $Q$ is the dual of $P$, then $\dim(P)=\dim(Q)$
\item If $P$ is an antichain of size at least~$2$, then $\dim(P)=2$.
\end{enumerate}

The following construction was first noted by Dushnik and Miller 
in~\cite{bib:DusMil}.
For an integer $d\ge2$, let $S_d$ be the following height~$2$ poset:\quad
$S_d$ has $d$ minimal elements $\{a_1,a_2,\dots,a_d\}$ and $d$ maximal elements
$\{b_1,b_2,\dots,b_d\}$.  The partial ordering on $S_d$ is defined
by setting $a_i<b_j$ in $S_d$ if and only if $i\neq j$.  Evidently,
$\dim(S_d)\ge d$, since if $L$ is any linear extension of $S_d$, there
can be at most one value of $i$ with $a_i>b_i$ in $L$.  On the other hand,
it is easy to see that $\dim(S_d)\le d$.  The poset $S_d$ is called
the \textit{standard example} (of dimension~$d$).

Hiraguchi~\cite{bib:Hira} proved that if $n\ge2$ and $|P|\le 2n+1$,
then $\dim(P)\le n$. The family of standard examples shows that
this inequality is tight.  Moreover\footnote{The inductive step in
the proof of Theorem~\ref{thm:Kimb},
as presented by Kimble in~\cite{bib:Kimb}, is relatively compact, 
and some might even say that it is elegant.  On the other hand, no
entirely complete proof of the base case ($n=4$) has
ever been written down---nor is this likely to happen.  The problem is to
show that if $|P|=9$ and $\dim(P)=4$, then $P$ contains $S_4$.  The 
issue is that the analogous statement is not true when $n=3$, as there 
are $20$ posets of size~$7$ which have dimension~$3$ and do not
contain a $3$-dimensional subposet on $6$ points.}, we have the following 
theorem, with the even case due to Bogart and Trotter~\cite{bib:BogTro} 
and the odd case due to Kimble~\cite{bib:Kimb}.

\begin{theorem}\label{thm:Kimb}
If $n\ge 4$ and $|P|\le 2n+1$, then $\dim(P)<n$ unless $P$ contains the
standard example $S_n$.
\end{theorem}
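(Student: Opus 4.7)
The plan is to proceed by induction on $n$, with base case $n = 4$.

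For the base case, I would need to show that every poset $P$ with $|P| \le 9$ and $\dim(P) = 4$ contains $S_4$. This is genuinely delicate: as the excerpt emphasizes, the analogous statement is false at $n = 3$, so any proof must exploit features particular to $n = 4$. I would first reduce to $|P|=9$ by observing that smaller dimension-$4$ examples can be extended to $9$ points by adjoining isolated elements, then run a structural case analysis on the critical pairs of $P$. The combinatorics of which critical pairs can be reversed together in one linear extension gives a hypergraph-coloring formulation, and I would aim to show that dimension $4$ on $9$ points forces a configuration of $4$ critical pairs exhibiting the $S_4$ pattern.

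For the inductive step, assume the result for $n-1\ge 4$, and let $|P|\le 2n+1$ with $\dim(P)=n$. The strategy is to delete one or two well-chosen points, invoke the inductive hypothesis to recover an $S_{n-1}$, and extend it to $S_n$ using the deleted point(s). Hiraguchi's removal theorem gives $\dim(P-x)\ge n-1$ for every $x$, and Hiraguchi's dimension bound applied to $P-x$ of size $2(n-1)+1$ gives $\dim(P-x)\le n-1$, so $\dim(P-x)=n-1$ whenever $|P-x|\le 2(n-1)+1$. This handles $|P|\le 2n$ by deleting a single point and applying induction. When $|P|=2n+1$, two points must be deleted, and one must choose them so that the two-point removal drops the dimension by exactly $1$, not $2$; this is a Hiraguchi-style selection argument, perhaps picking an incomparable pair whose removal is bounded using a realizer of minimum size.

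The main obstacle is the extension step. Given an $S_{n-1}$ in $P-x$ on points $\{a_i,b_i:i<n\}$, the deleted $x$ together with at most one leftover point must play the role of the new extremal pair $(a_n,b_n)$. The key leverage is that $x$ was chosen so that $\dim(P-x)=n-1<\dim(P)$: this strict drop forces $x$ to witness some critical pair of $P$ whose reversal cannot be absorbed into any of the $n-1$ reversal classes used by a realizer of $P-x$. Translating this into the concrete comparability pattern needed to extend $S_{n-1}$ to $S_n$ is the technical heart of the argument, and the case analysis distinguishing $x$ minimal, maximal, or neither will be where most of the work lies. Overall, I expect the base case $n=4$ to be by far the biggest bottleneck, consistent with the authors' remark that no fully complete written proof of it exists.
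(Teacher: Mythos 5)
This statement is not proved in the paper at all: it is quoted from the literature, with the even case ($|P|=2n$) due to Bogart and Trotter and the odd case due to Kimble, and the footnote attached to it explicitly records that the base case $n=4$ has never been given a completely written proof. Your outline follows the same general shape as Kimble's argument (induction on $n$ with a point-removal inductive step and a delicate base case at $n=4$), but as it stands it is a research plan rather than a proof, and the two places where you defer the work are exactly the places where the entire difficulty lives. For the base case you only say you ``would aim to show'' that dimension $4$ on $9$ points forces the $S_4$ pattern; no reduction to critical pairs or hypergraph-coloring reformulation is carried out, and since the analogous statement already fails at $n=3$ (there are $20$ seven-point posets of dimension $3$ containing no $3$-dimensional six-point subposet), nothing in your sketch isolates what changes at $n=4$. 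Given the authors' remark that no complete proof of this case has ever been written down, asserting it as a base case without an argument is a genuine gap, not a routine omission.

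The inductive step has the same problem in its second half. The bookkeeping is fine: Hiraguchi's one-point and two-point removal inequalities together with $\dim(Q)\le\lfloor|Q|/2\rfloor$ do give you a point $x$ (or an incomparable pair $a\in\Min(P)$, $b\in\Max(P)$) whose removal leaves a poset of size at most $2(n-1)+1$ and dimension exactly $n-1$, so induction hands you a copy of $S_{n-1}$. But the claim that the strict drop $\dim(P-x)=n-1<n$ ``forces $x$ to witness a critical pair whose reversal cannot be absorbed'' and that this translates into the comparability pattern needed to adjoin a new pair $(a_n,b_n)$ completing $S_n$ is precisely the content of the Bogart--Trotter and Kimble papers; it does not follow formally from the dimension count. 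In particular, the inductive hypothesis gives you \emph{some} copy of $S_{n-1}$ in $P-x$, with no control over how $x$ and the leftover points relate to it, and one must argue (typically by re-choosing the copy or re-choosing the removed points) that a suitable copy exists --- the case analysis you wave at (``$x$ minimal, maximal, or neither'') is where the theorem is actually proved. So the proposal correctly identifies the skeleton of the known argument but supplies neither of its two essential components.
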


If a poset $P$ contains a large standard example, then the dimension
of $P$ is large, and the following result, which is both
a generalization of Theorem~\ref{thm:Kimb} and a
poset analogue of Proposition~\ref{pro:graph}, is the first of the two 
principal results in this paper.

\begin{theorem}\label{thm:main-dim}
For every positive integer $c$, there is an integer $f(c)=O(c^2)$ so that if 
$n>10f(c)$ and $P$ is a poset with $|P|\le 2n+1$ and $\dim(P)\ge n-c$, 
then $P$ contains a standard example $S_d$ with $d\ge n-f(c)$.
\end{theorem}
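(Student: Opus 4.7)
My plan is to locate a maximum antichain of near-maximum size and extract a large standard example from it together with its immediate neighbors, losing $O(c^2)$ elements in a Ramsey-style cleanup. Since $\dim(P)\le\width(P)$ is classical, the hypothesis produces a maximum antichain $A\subseteq P$ with $w:=|A|\ge n-c$. Because $A$ is an antichain, the complement $P\setminus A$ partitions disjointly into $U\sqcup D$, where $U$ consists of points strictly above some element of $A$ and $D$ of points strictly below; moreover $|U|+|D|=|P|-w\le n+c+1$, and the elementary fact that each point removal can drop the dimension by at most one gives $\dim(P)\le\dim(A)+|P\setminus A|=2+|U|+|D|$, forcing $|U|+|D|\ge n-c-2$. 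Both $|A|$ and $|U|+|D|$ are thus within $O(c)$ of $n$.

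Any embedded copy of $S_d$ in $P$ must have every one of its pairs $(a_i,b_i)$ either of type~I, with $a_i\in A$ and $b_i\in U$, or of type~II, with $a_i\in D$ and $b_i\in A$, since a pair lying in $A\times A$ would violate the antichain property. For $u\in U$ set $\phi(u):=\{a\in A:a<u\}$, and dually set $\psi(v):=\{a\in A:a>v\}$ for $v\in D$. Finding $S_d$ then reduces to choosing a subset $B\subseteq A$ of size $d\ge n-f(c)$, a partition $B=B_1\sqcup B_2$, and injective partner maps $B_1\to U$, $B_2\to D$ whose image of each partner meets $B$ in exactly $B\setminus\{a\}$ under $\phi$ or $\psi$, with the chosen partners forming antichains in $U$ and $D$ separately, and with $B_1$ together with the $D$-side partners of $B_2$ forming an antichain in $P$.

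The combinatorial core is a Ramsey-style extraction: starting from all of $A$ and all candidate partners, one prunes successively so that on the surviving core (a)~every used partner $u\in U$ satisfies $|B\setminus\phi(u)|\le 1$ (dually for $v\in D$); (b)~the missing points induce a bijective "mate" correspondence from partners back to $B$; and (c)~the selected partners are mutually incomparable and compatible with the mixed bottom antichain. Each class of irregularity can be charged to the dimension deficit $c=n-\dim(P)$ and shown to be of order $O(c)$, while repairing it costs $O(c)$ $A$-points, giving a total loss of $O(c)\cdot O(c)=O(c^2)$ and hence $f(c)=O(c^2)$. Theorem~\ref{thm:Kimb} applied to the resulting clean two-level subposet certifies the desired copy of $S_d$ with $d\ge n-f(c)$.

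The principal obstacle is step~(c): excising a partner $b(a)$ involved in an unwanted comparability $b(a)<b(a')$ forces the simultaneous removal of its mate $a$ from $B$, which may in turn violate the singleton $\phi$- or $\psi$-deficit condition established in (a) for some other partner, triggering a cascade of removals. Controlling this cascade is the main technical difficulty, and I expect it to be handled by modeling the bad comparabilities as edges in an auxiliary bipartite graph between partner pairs and $A$-points and invoking a K\H{o}nig- or Hall-type argument on a minimum vertex cover; bounding that cover by $O(c^2)$ is precisely where the quadratic growth of $f(c)$ is incurred.
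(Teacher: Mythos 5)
Your outline identifies the right starting point (a maximum antichain $A$ with $|A|\ge n-c$, since $\dim(P)\le\width(P)$), but from there the proposal is a plan rather than a proof, and the plan is missing the one thing the theorem actually requires: a mechanism that converts the hypothesis $\dim(P)\ge n-c$ into structural control of the comparabilities between $A$ and $P-A$. Your central assertion --- that each ``class of irregularity'' obstructing a clean standard-example structure ``can be charged to the dimension deficit $c$ and shown to be of order $O(c)$,'' repairable at cost $O(c)$ apiece --- is exactly the content of the theorem and is nowhere argued. Nothing in the hypothesis tells you, a priori, that there exists even one element of $U$ lying above all but one point of a prospective $B\subseteq A$; the existence of $n-f(c)$ such elements \emph{is} the conclusion. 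The paper gets this only indirectly, by contradiction: it first kills one side of $P-A$ using a new inequality (Lemma~\ref{lem:TroMon}, giving $|X|=O(c)$ and reducing to a bipartite poset), then assumes the largest standard example $T=S_d$ has $d<n-f(c)$, extracts a second disjoint standard example $T'=S_t$ from the leftover part via a counting argument with Lemma~\ref{lem:match-1} and Lemma~\ref{lem:save-1}, matches the pairs of $T'$ to pairs of $T$ by Hall's theorem so that no quadruple forms $S_2$ (using maximality of $d$), and finally invokes the key two-standard-examples inequality (Lemma~\ref{lem:two-standard}) to save $s=41c$ linear extensions, contradicting $\dim(P)\ge n-c$. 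Your sketch contains no analogue of this contradiction scheme or of any of these lemmas, and you yourself flag the cascade problem in step~(c) as something you merely ``expect'' a K\H{o}nig/Hall cover argument to handle; that expectation is unsupported, and it is not how the quadratic bound arises in the actual proof (there $f(c)=17ct=3485c^2$ comes from partitioning $A-A_0$ into $16c$ blocks and taking $t=5s=205c$).

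Two further concrete problems. First, your claim that every pair of an embedded $S_d$ must be of type~I ($A\times U$) or type~II ($D\times A$) is false: a pair can have its minimal element in $D$ and its maximal element in $U$, so even the bookkeeping for an arbitrary maximal copy is wrong, and your two-sided construction never resolves the $U$--$D$ interactions (the paper avoids them entirely by proving the down side has size $O(c)$). Second, the closing appeal to Theorem~\ref{thm:Kimb} cannot work: after your cleanup the surviving two-level subposet has at best dimension about $n-c$ minus whatever was discarded, which is strictly less than half its size, so Theorem~\ref{thm:Kimb} does not apply; either your construction exhibits $S_d$ explicitly (making the appeal superfluous) or you have no certificate at all. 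As it stands the proposal records the intended shape of an argument but leaves the theorem's essential difficulty unaddressed.
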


In our proof, the function $f(c)$ will be quadratic in $c$, but
this may not be best possible.  However, we are able to show that
$f(c)=\Omega(c^{4/3})$.  Of course, the restriction $n>10f(c)$ in the statement of
Theorem~\ref{thm:main-dim} is just intended to make $n$ sufficiently
large in terms of $c$.  This restriction also serves to keep us safely
away from annoying small cases.

In the remainder of this introductory section, we briefly discuss
results which are more substantive than Proposition~\ref{pro:graph} and 
serve to motivate our main theorem.
In Section~\ref{sec:preliminaries}, we gather some essential preliminary
material, and the proof of our main theorem is given in Section~\ref{sec:proof}.

In Section~\ref{sec:fracdim}, we prove an analogous theorem for fractional
dimension, and in this setting, the function $f(c)$ is linear in $c$.
Of course, this result is best possible up to the value of the multiplicative
constant.  We close with some open problems in Section~\ref{sec:close}.

\subsection{Large Dimension without Large Standard Examples}

Dushnik and Miller~\cite{bib:DusMil}
made the following observation:\quad  For an integer $n\ge3$, let
$P(1,2;n)$ denote the poset consisting of the $1$-element and $2$-element
subsets of $\{1,2,\dots,n\}$, and let $d(1,2;n)=\dim(P(1,2;n))$.  
Using the classic theorem of Erd\H{o}s
and Szekeres, they noted that $\dim(1,2;n)=\Omega(\lg\lg n)$.
While $P(1,2;n)$ contains $S_3$, it does not contain $S_d$ for any $d\ge 4$.

We pause to comment that much more can be said about the growth
rate of $d(1,2;n)$. By combining results of Ho\c{s}ten and
Morris~\cite{bib:HosMor} with estimates of Kleitman and 
Markovsky~\cite{bib:KleMar}, the following theorem follows in a
straightforward manner.

\begin{theorem}\label{thm:12n}
For every $\epsilon>0$, there is
an integer $n_0$ so that if $n>n_0$ and 
\[
s= \lg\lg n + 1/2 \lg\lg\lg n +1/2\lg{\pi} + 1/2,
\]
then $s-\epsilon < d(1,2;n)< s +1 +\epsilon$.
\end{theorem}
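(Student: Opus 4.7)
The plan is to combine two previously established results. First, Hoşten and Morris~\cite{bib:HosMor} translate the computation of $d(1,2;n)$ into an extremal problem in the Boolean lattice: they show that $d(1,2;n)$ equals the least $d$ for which a certain counting function $M(d)$, defined via antichains in $2^{[d-1]}$ meeting a prescribed cross-intersection condition, is at least $n$.  Second, Kleitman and Markovsky~\cite{bib:KleMar} provide sharp asymptotic bounds for $M(d)$ that are essentially of Dedekind type and yield
\[
\lg M(d) = \binom{d-1}{\lfloor (d-1)/2\rfloor}\bigl(1+o(1)\bigr).
\]
Chaining these gives a quantitative relationship between $\lg\lg n$ and the central binomial coefficient, which we then need to invert.

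The remainder is asymptotic bookkeeping.  Stirling's approximation gives
\[
\lg\binom{d-1}{\lfloor (d-1)/2\rfloor} = d - \tfrac{1}{2}\lg d - \tfrac{1}{2}\lg\pi - \tfrac{1}{2} + o(1).
\]
By construction, $d(1,2;n)=d$ precisely when $M(d-1)<n\le M(d)$.  Taking $\lg\lg$ of both inequalities and substituting the Stirling-expanded Kleitman--Markovsky asymptotic yields, after rearrangement,
\[
\lg\lg n + \tfrac{1}{2}\lg d + \tfrac{1}{2}\lg\pi + \tfrac{1}{2} - o(1) \;\le\; d \;\le\; \lg\lg n + \tfrac{1}{2}\lg(d-1) + \tfrac{1}{2}\lg\pi + \tfrac{3}{2} + o(1).
\]
Since necessarily $d=\Theta(\lg\lg n)$, one substitutes $\lg d = \lg\lg\lg n + o(1)$ to arrive at $s-o(1)\le d(1,2;n)\le s+1+o(1)$, which implies the stated inequalities $s-\epsilon<d(1,2;n)<s+1+\epsilon$ for all sufficiently large~$n$.

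The main obstacle---and the reason for the $+1$ appearing on the upper side---is that $d(1,2;n)$ is integer-valued while $M(d)$ increases in large jumps, so the inversion carries an unavoidable additive loss of one.  In particular, the width-$(1+2\epsilon)$ window in the conclusion is intrinsic to this approach rather than an artifact of slack in the estimates.  Beyond that, the argument is a careful composition of the two published theorems, and what requires vigilance is ensuring that the error in the Kleitman--Markovsky bound, after being passed through two logarithms and the iterative inversion $d\mapsto \lg d \mapsto \lg\lg\lg n$, still contributes only $o(1)$ to the final expression for $d$; the published bounds are comfortably sharp enough for this to succeed, so the task is routine rather than conceptually novel.
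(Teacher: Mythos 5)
Your proposal is correct and follows essentially the same route the paper intends: the paper offers no detailed proof, asserting only that the statement "follows in a straightforward manner" by combining the Ho\c{s}ten--Morris characterization of $d(1,2;n)$ with the Kleitman--Markovsky estimate, which is precisely the inversion-plus-Stirling computation you carry out. One small sharpening: to substitute $\lg d = \lg\lg\lg n + o(1)$ you need $d=(1+o(1))\lg\lg n$ rather than merely $d=\Theta(\lg\lg n)$, but this stronger fact is immediate from your displayed two-sided bound, so the argument stands.
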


As a consequence, for almost all large values of $n$, we can
compute the value of $d(1,2;n)$ \textit{exactly}; for the
remaining small fraction of values, we are able to compute two
consecutive integers and say that $d(1,2;n)$ is one of the
two.

But a poset can have large dimension without containing $S_3$.
In \cite{bib:FelTro}, Felsner and Trotter observed that if $d$ and $g$
are positive integers, then there is a poset $P$ of height~$2$ for
which the girth of the comparability graph of $P$ is at least $g$
while the dimension of $P$ is at least $d$.  In fact, this observation is an
immediate consequence of the well known fact that there is a graph
$G$ whose girth is at least $g$ and whose chromatic number is at least~$d$
(see~\cite{bib:FelTro} and~\cite{bib:FeLiTr} for additional details
on adjacency posets and related problems).  Posets with large dimension
and large girth will contain the standard example $S_2$.  However, they
do \textit{not} contain the standard example $S_3$.

To close this story, we note that a poset can have large dimension 
without containing $S_2$.
A poset $P$ is called an \textit{interval order} if there is
a family $\cgI=\{[a_x,b_x]: x\in P\}$ of closed intervals of the
real line $\mathbb{R}$ so that $x<y$ in $P$ if and only if $b_x<a_y$
in $\mathbb{R}$. Fishburn~\cite{bib:Fish} showed that a poset $P$ is an
interval order if and only if it does not contain the standard
example $S_2$.  We note that $S_2$ is isomorphic to $\mathbf{2}+
\mathbf{2}$, the disjoint sum of two $2$-element chains. 
In general posets of height~$2$ can have arbitrarily large dimension.
However, Rabinovitch~\cite{bib:Rabi} showed that the dimension of
an interval order is bounded in terms of its height. Specifically, he
showed that the maximum dimension $d_n$ of an interval order of height
$n$ is $O(\log n)$.

In~\cite{bib:BoRaTr}, Bogart, Rabinovitch and Trotter considered
the \textit{canonical} interval order $I(n)$ consisting of all intervals with
distinct integer end points in $\{1,2,\dots,n\}$ and showed that 
$\dim(I(n))$ goes to infinity with $n$.  
However, by combining the results of F\"{u}redi, Hajnal, R\"{o}dl and 
Trotter~\cite{bib:FHRT} with the same analysis used for Theorem~\ref{thm:12n},
it is easy to verify that $|\dim(I(n))-d(1,2:n)|\le 5$ and
$|d_n-d(1,2;n)|\le 5$, for all $n\ge2$.  So the values of
$\dim(I(n))$ and $d_n$ can be computed ``almost exactly.''

\subsection{Forcing Large Standard Examples}

With this background in mind, it is natural to ask whether there
are conditions which force a poset of large dimension to contain
a large standard example.  A partial answer is provided 
by the following theorem of Bir\'{o}, Hamburger and P\'{o}r~\cite{bib:BiHaPo-1}.

\begin{theorem}\label{thm:BHP}
For every integer $d\ge2$ and every $\epsilon >0$, there is an
integer $n_0$ so that if $n>n_0$ and $P$ is a poset with $|P|\le 2n+1$ 
and $P$ does not contain the standard example $S_d$, then $\dim(P)<\epsilon n$.
\end{theorem}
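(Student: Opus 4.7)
I would prove the contrapositive: if $|P|\le 2n+1$ and $\dim(P)\ge\epsilon n$, then for $n$ large enough (in terms of $d$ and $\epsilon$) the poset $P$ must contain $S_d$. The strategy combines a height reduction that turns $P$ into a bipartite incomparability problem with a Ramsey-style extraction of $S_d$ powered by the dimension hypothesis.

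First, I would reduce to height~$2$. At the cost of losing only a constant factor in the dimension, pass to an induced subposet on $\Min(P)\cup\Max(P)$, or, if this loses too much, construct an auxiliary height-$2$ poset by ``splitting'' middle elements into a minimum and maximum copy in the spirit of the adjacency poset construction. Either way, one arrives at a height-$2$ poset $P'$ on $O(n)$ elements with $\dim(P')=\Omega(\epsilon n)$. After relabeling, I may assume $P$ itself has height~$2$ with ground set $\Min(P)\cup\Max(P)$.

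Next I would translate to a bipartite graph. Let $B$ be the bipartite graph with parts $\Min(P)$ and $\Max(P)$ and edges the incomparable min-max pairs. A copy of $S_d$ in $P$ corresponds exactly to an induced matching of size $d$ in $B$. For height-$2$ posets the dimension is the minimum number of linear extensions needed to reverse every edge of $B$, so a lower bound on $\dim(P)$ gives a chromatic-type lower bound on a naturally associated hypergraph of ``conflicting'' edges of $B$. The hypothesis $\dim(P)\ge\epsilon n$ then forces $B$ to have a large and structurally rich collection of incomparable pairs.

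Finally, I would extract $S_d$ by a Ramsey-style iteration. Greedily build the standard example: choose an incomparable pair $(a_1,b_1)$, then repeatedly pass to a ``trace class'' of remaining minimal elements that agree on their comparability with the maximal elements chosen so far, so that any newly picked pair is automatically consistent with the induced matching being built. A Sauer--Shelah / trace-counting argument, fed by the chromatic lower bound from the previous step, ensures that a linear fraction of minimal elements survives each restriction; after $d$ stages, provided $n > n_0(d,\epsilon)$, one recovers the desired $d$ minimal and $d$ maximal elements. \textbf{The main obstacle} is precisely this last step: one must convert the bare inequality $\dim(P)\ge\epsilon n$ into a robust statement about the \emph{diversity of downset traces} in $\Min(P)$, because merely counting incomparable pairs is not enough to keep the iteration alive for $d$ rounds. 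It is the quantitative strength of dimension, rather than the size of $\Inc(P)$, that drives the argument and determines the dependence of $n_0$ on $d$ and $\epsilon$.
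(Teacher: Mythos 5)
This theorem is not proved in the present paper at all: it is quoted from Bir\'o, Hamburger and P\'or~\cite{bib:BiHaPo-1}, so there is no in-paper argument for you to match, and the question is only whether your outline would itself constitute a proof. It would not, and the first concrete problem is the height reduction. Restricting to $\Min(P)\cup\Max(P)$ can destroy dimension completely: take any bipartite poset $Q$ with $\dim(Q)=\Theta(n)$ and add one global minimum and one global maximum; then $\dim(P)\ge\dim(Q)$, while the subposet induced on $\Min(P)\cup\Max(P)$ is a two-element chain. The ``splitting'' alternative is no better, because the dimension of the split is governed by the interval dimension of $P$ rather than by $\dim(P)$ (for instance, the canonical interval orders have splits of dimension at most $2$ while their own dimension is unbounded), so the hypothesis $\dim(P)\ge\epsilon n$ need not survive either transformation, even up to a constant factor. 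A reduction to the bipartite case can be made to work, but it requires an argument; the analogous reduction in the proof of Theorem~\ref{thm:main-dim} goes through a maximum antichain and Lemma~\ref{lem:TroMon}, and it leans heavily on the dimension being within an additive constant of $n$, which you do not have under the weaker hypothesis $\dim(P)\ge\epsilon n$.

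The second and more serious issue is that the step you yourself flag as ``the main obstacle'' is the entire content of the theorem, and your sketch does not supply it. You never convert the hypothesis $\dim(P)\ge\epsilon n$ (in the bipartite setting this is really a statement about reversing the incomparable min--max pairs, i.e.\ about $\Idim$, which differs from $\dim$ by at most one) into any quantitative statement about the diversity of traces or neighborhoods in the incomparability bigraph $B$; the claim that a Sauer--Shelah count keeps a linear fraction of the minimal elements alive through each of the $d$ refinement rounds is asserted, not derived, and as you concede, counting incomparable pairs alone cannot drive the iteration. What remains is therefore a plan whose central lemma --- something of the form ``if the incomparability bigraph of a bipartite poset on $2n$ points has no induced matching of size $d$, then $\Idim$ is $o(n)$'' --- is missing, and that lemma is exactly the theorem. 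The cited proof in~\cite{bib:BiHaPo-1} is a substantive argument in its own right (with a correspondingly large $n_0$), so there is no reason to expect the gap to be closable in a few lines.
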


Paralleling our earlier discussion on chromatic number,
it was natural for Bir\'{o}, F\"{u}redi and Jahanbekam to 
conjecture\footnote{To be precise, in~\cite{bib:BiFuJa},
Bir\'{o}, F\"{u}redi and Jahanbekam conjectured that for a fixed small 
$\epsilon>0$, a poset $P$ with at most $2n+1$ points and dimension at least 
$(1-\epsilon)n$ must contain a standard example $S_d$ with $d$ a positive 
fraction of $n$.  Examples~1 and 2 show that this conjecture is too strong.
Nevertheless, our Theorem~\ref{thm:main-dim} confirms the basic
intuition behind their conjecture.} in~\cite{bib:BiFuJa} that 
when $n$ is large and $P$ is
a poset with $|P|\le 2n+1$, if the dimension of $P$ is close to $n$,
then $P$ must contain a standard example $S_d$ with $d$ also
close to $n$. 

Here are two examples to show that when $|P|\le 2n+1$, the dimension 
must be quite close to $n$ in order to force $P$ to contain a large
standard example. The first example was studied, for a 
quite different purpose, by Howard and Trotter~\cite{bib:HowTro}. In
dual form, this poset was also studied by F\"{u}redi and 
Kahn~\cite{bib:FurKah}.

\medskip
\noindent
\textit{Example 1.}\quad
Consider a finite projective plane of order $q$.  We associate with this
geometry a poset $P$ of height $2$ with $|P|=2n=2(q^2+q+1)$.  
The minimal elements of $P$ are
the points of the geometry and the maximal elements of $P$ are the
lines.  In $P$, point $x$ is less than line $y$ when $x$ is \textit{not}
on~$y$.  It can be derived from results in \cite{bib:Blok} that if $X$ is a set
of points and $Y$ is a set of lines in a finite projective plane of
order $q$, and no point of $X$ belongs to any line in $Y$, 
then $|X||Y|\le q^3$.  It follows that if $S_d$ is
contained in $P$, then $d\le 2q^{3/2}$.  In fact, Ill\'{e}s, Sz\H{o}nyi
and Wettl~\cite{bib:IlSzWe} tightened this elementary bound
and showed that $d\le q\sqrt{q}+1$.

Now suppose that $t=\dim(P)$ and let $\cgR=\{L_1,L_2,\dots,L_t\}$ be
a realizer of $P$.  Suppose that $t< q^2+q+1-q^{3/2}$.  Then there is
a set $X$ of points, with $|X|>q^{3/2}$, so that no point
of $X$ is the top point in any linear extension in $\cgR$. (Here ``point'' refers to a point in the geometry.) Dually, 
there is a set $Y$ of lines, with $|Y|>q^{3/2}$, so that no line
in $Y$ is the lowest line in any linear extension in $\cgR$. Using
the basic property of a finite geometry that two points determine
a unique line, it follows easily that $x<y$ in $L_i$ for all $x\in X$,
$y\in Y$ and for all $i=1,2,\dots,t$. Indeed, if $x\parallel y$, then there is a linear extension in which $y<x$; but there are also a point $x'$ and a line $y'$ such that in the same linear extension $y'<y<x<x'$, which would imply $\{y,y'\}\parallel\{x,x'\}$, a contradiction.

We have shown that no point of $X$ is
on any of the lines in $Y$.  This is a contradiction, since $|X||Y|>
q^3$.  It follows that $\dim(P)\ge q^2+q+1 -q^{3/2}$, so when 
$q$ is large, $\dim(P)=(1-o(1))n$, yet if $S_d$ is a
standard example contained in $P$, then $d=O(n^{3/4})=o(n)$. 
As we will detail later, this example provides the lower bound
$f(c)=\Omega(c^{4/3})$ for the function $f(c)$ in our main theorem.

\medskip
\noindent
\textit{Example 2.}\quad
This example uses results from~\cite{bib:ErKiTr} where Erd\H{o}s, Kierstead
and Trotter study the behavior of a random poset $P$ of height~$2$.  
In this setting $P=A\cup B$, where $A$ and $B$ are $n$-element antichains.
Fix a probability $p$ (in general $p$ is a function of $n$).  Then for
each of the $n^2$ pairs $(a,b)\in A\times B$, we set $a<b$ in $P$ with
probability~$p$.  Events corresponding to distinct pairs are independent. 

The following statement is simply an extraction of more comprehensive results
proved in~\cite{bib:ErKiTr}:\quad If $p=1/2$, and $n$ is very large, 
then almost surely, the following two statements hold:
(1)~$\dim(P)> n-1000n/\log n$, and (2)~$P$ does not contain any standard 
example $S_d$ with $d\ge 100\log n$.

\section{Essential Preliminary Material}\label{sec:preliminaries}

The proof of our main theorem will require a number of well-known
inequalities in dimension theory.  These results use the following 
notation and terminology.

When $P$ is a poset, $\Min(P)$ and $\Max(P)$ denote, respectively,
the set of minimal elements and the set of maximal elements of $P$.
A subposet $D$ of $P$ is called a \textit{down set} if $y\in D$ whenever
$x\in D$ and $x>y$ in $P$.  Dually, a subposet $U$ is called an
\textit{up set} in $P$ if $y\in U$ whenever $x\in U$ and $y>x$ in $P$.
Of course, $D$ is a down set in $P$ if and only if $U=P-D$ is an up set
in $P$.  When $x$ is a point in $P$ and $Q$ is a subposet of $P$, 
we let;

\begin{enumerate}
\item $D(x,Q) = \{u\in Q:u<x$ in $P\}$;
\item $U(x,Q) = \{v\in P:v>x$ in $P\}$; and
\item $I(x,Q) = \{y\in Q-\{x\}:y\parallel x$ in $P\}$.
\end{enumerate}

When $A$ is a maximal antichain in $P$, the subposet $P-A$ is naturally
partitioned into a subposet $D_P(A)=\{x\in P-A:x<a$ for some $a\in A\}$
and a subposet $U_P(A)=\{y\in P-A:y>a$ for some $a\in A\}$.  When
no confusion may arise, we simply write $D(A)$ and $U(A)$ rather than
$D_P(A)$ and $U_P(A)$.  In discussing subposets
of $P$, we use the natural convention that when $Q$ is empty,
$\width(Q)=\dim(Q)=0$.

With this notation in hand, we list in the following theorem the
essential results we will need.

\begin{theorem}\label{thm:comprehensive}
Let $P$ be poset.  Then the following inequalities hold.
\begin{enumerate}
\item $\dim(P)\le\width(P)$.
\item If $|P|\ge 2$ and $x\in P$, then $\dim(P)\le 1+\dim(P-\{x\})$.
\item $\dim(P)\le\max\{2, 1+\width(P-\Min(P))\}$.
\item If $A$ is a maximal antichain in $P$,
then $\dim(P)\le\max\{2,|P-A|\}$.
\item If $A$ is a maximal antichain in $P$,
then $\dim(P)\le\max\{2, 1+2\width(P-A)\}$.
\item If $D$ is a down set in $P$, and $U=P-D$, then $\dim(P)\le
\dim(D)+\width(U)$.
\item If $a\in \Min(P)$, $b\in \Max(P)$ and $a\parallel b$ in $P$,
then $\dim(P)\le 1+\dim(P-\{a,b\})$.
\end{enumerate}
\end{theorem}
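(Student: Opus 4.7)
These seven inequalities are classical results in dimension theory, and my plan is to verify each using the unifying observation that a realizer of $P$ is exactly a family $\cgR$ of linear extensions such that every ordered incomparable pair $(x,y)$ is \emph{reversed} by some $L\in\cgR$ (meaning $x>y$ in $L$). So in each case I exhibit the appropriate number of linear extensions and show they collectively reverse every such pair. The recurring tools are Dilworth's chain decomposition, inserting a single element at its highest or lowest feasible position in an existing linear extension, and concatenating a realizer of a down set with a linear extension of its complementary up set.

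I would dispatch (1), (2), and (7) first by direct construction. For (1), apply Dilworth to obtain chains $C_1,\dots,C_w$ with $w=\width(P)$; for each $i$ build $L_i$ by placing elements of $C_i$ as high as $P$ permits, so that every pair $(x,y)$ with $x\in C_i$, $y\notin C_i$, $x\parallel y$ is reversed. For (2), take a $d$-realizer of $P-\{x\}$, extend each $L_i$ by inserting $x$ just above $D(x,P)$ (reversing every $(y,x)$ with $y\parallel x$), and add one extension placing $x$ just below $U(x,P)$ for the other direction. Inequality (7) applies the same device to two points simultaneously: the $d$ extensions of $P-\{a,b\}$ are augmented with $a$ at the bottom and $b$ at the top, and one further extension with $a$ pushed up and $b$ pushed down handles the remaining pairs; the extra extension is consistent precisely because $a\parallel b$.

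For the structural bounds (3), (5), and (6) I would exploit the down-set / up-set cut. For (6), take a $\dim(D)$-realizer of the down set $D$ and append a fixed linear extension of $U$ on top of each $L_i$; this reverses every pair inside $D$ and the $(u,d)$ direction of the $D$--$U$ incomparable pairs. To reverse the $(d,u)$ direction, partition $U$ into $\width(U)$ chains and, for each chain $C$, build one linear extension of $P$ placing the elements of $C$ as low as $P$ allows---below every element of $D$ incomparable to $C$; the augmentation is cycle-free because no element of $U$ lies strictly below any element of $D$ in $P$. Inequality (3) is obtained by applying the same idea with $\Min(P)$ as the down set, exploiting the fact that the antichain $\Min(P)$ is at most $2$-dimensional to save one linear extension. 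Inequality (5) follows from (3) and (6) via the cut $D^*=D(A)\cup A$ versus $U(A)$: applying (3) in dual form to $D^*$ (whose maximal antichain $A$ sits at the top) gives $\dim(D^*)\le\max\{2,1+\width(D(A))\}$, and (6) yields $\dim(P)\le\dim(D^*)+\width(U(A))$, with both widths bounded by $\width(P-A)$.

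The main obstacle is inequality (4), where the bound involves $|P-A|$ rather than a width parameter and resists a pure chain-decomposition argument. My plan is to enumerate $P-A=\{z_1,\dots,z_k\}$ and construct, for each $z_i$, a linear extension $L_i$ tailored so that together they reverse every incomparable pair of $P$: pairs with at least one endpoint outside $A$ are handled directly by the $L_i$ that places $z_i$ ``out of place,'' while pairs inside $A$ are reversed by exploiting the fact that every $z_i$ is comparable to some element of $A$ (by maximality of $A$) to assign antichain pairs of $A$ to the various $L_i$. Verifying that the assignment covers every incomparable pair of $A$ in both directions, while remaining compatible with the rest of each $L_i$, is the delicate step and where I expect most of the work to go.
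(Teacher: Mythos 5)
First, a point of comparison: the paper does not prove Theorem~\ref{thm:comprehensive} at all; it is a list of known inequalities with attributions (Dilworth for (1), Hiraguchi for (2) and (7), Trotter and Kimble for (3)--(5), Trotter--Wang for (6)), so there is no in-paper argument to match and your sketch must stand on its own. Parts of it do: (1) is the standard chain-decomposition proof, (7) is exactly Hiraguchi's argument via the reversibility of $\{(a,u)\}\cup\{(v,b)\}$ (Proposition~\ref{pro:gen-mixpair}), and your derivation of (5) from the dual of (3) together with (6) applied to the down set $A\cup D(A)$ is correct, since $\Max(A\cup D(A))=A$ and both relevant widths are at most $\width(P-A)$.

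However, (2) contains a step that fails. Inserting $x$ just above $D(x,P)$ in each member of an \emph{arbitrary} realizer of $P-\{x\}$ does not reverse every pair $(y,x)$ with $y\parallel x$: an element $y$ incomparable to $x$ can lie below some element of $D(x,P)$ incomparable to $y$ in every member of the chosen realizer. Concretely, let $P=\{x,y,d_1,d_2\}$ with $d_1<x$ and $d_2<x$ the only relations, and take the realizer $M_1\colon d_1<y<d_2$, $M_2\colon d_2<y<d_1$ of $P-\{x\}$; after your insertions $x$ is above $y$ in both, and it is also above $y$ in the added extension with $x$ pushed up, so $(y,x)$ is reversed nowhere. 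The device is safe in (7) precisely because there $a$ is minimal and $b$ is maximal, so bottom and top really mean below/above everything incomparable; for an interior point the interleaving problem is the whole content of Hiraguchi's theorem, and a correct proof must either choose the realizer of $P-\{x\}$ judiciously or argue differently. Relatedly, in (3) the phrase ``save one linear extension'' is asserted rather than proved: the pairs $(a,z)$ with $a\in\Min(P)$ cannot all be reversed by one extension (both directions of an incomparable pair of minimal elements occur), so the saving must come from a finer bookkeeping, e.g.\ ordering $\Min(P)$ oppositely inside two of the chain extensions and spending the single extra extension on the set $\{(u,a):u\notin\Min(P),\,a\in\Min(P),\,u\parallel a\}$, which is reversible; in (6) you never account for incomparable pairs inside $U$ (they are covered only if ``as low as $P$ allows'' means below every incomparable element, not merely below $D$). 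Finally, for (4) you explicitly leave the key step open, and that step is the actual theorem of Trotter and Kimble, so the proposal does not establish it. In summary: (1), (5), (7) and essentially (6) are fine, but (2) has a genuinely false step and (3), (4) are not proved.
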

The first inequality is due to Dilworth~\cite{bib:Dilw}.
The second and seventh are due to Hiraguchi~\cite{bib:Hira}.
The third, fourth and fifth are due to Trotter~\cite{bib:Trot-2} 
(the fourth was discovered independently by Kimble~\cite{bib:Kimb}).
The sixth is due to Trotter and Wang~\cite{bib:TroWan}.

These inequalities have forms which can be applied to the dual of
a poset, and we will use these dual forms without comment.

\subsection{Reversible Sets and Alternating Cycles}

Let $P$ be a poset and let $\Inc(P)=\{(x,y):x\parallel y$ in $P\}$.
When $(x,y)\in \Inc(P)$ and $L$ is a linear extension of $P$, we
say $L$ reverses $(x,y)$ when $x>y$ in $L$.  More generally, we say
a family $\cgF$ of linear extensions \textit{reverses} a set $S\subseteq
\Inc(P)$ when for every $(x,y)\in S$, there is some $L\in \cgF$ which
reverses $(x,y)$. The dimension of $P$ is then the least positive
integer $d$ for which there is a family $\cgF$ of $d$ linear extensions
of $P$ which reverses $\Inc(P)$.  This reformulation of dimension 
in terms of a partition of the set of incomparable pairs was first
stated explicitly by Rabinovitch and Rival in~\cite{bib:RabRiv}. 

A set $S\subseteq \Inc(P)$ is said to be \textit{reversible} when there
is a single linear extension $L$ of $P$ which reverses every pair
in $S$.  So the dimension of a poset $P$ which is not a chain is
then the least $d$ for which $\Inc(P)$ can be partitioned into
$d$ reversible sets. 

When $k\ge2$, a sequence $\{(a_i,b_i):1\le i\le k\}$ of pairs
from $\Inc(P)$ is called an \textit{alternating cycle} when
$a_i\le b_{i+1}$ in $P$ for every $i=1,2,\dots,k$ (this requirement
is interpreted cyclically, i.e., we intend that $a_k\le b_1$ in $P$).
An alternating cycle is \textit{strict} when $a_i\le b_j$ in $P$ if
and only if $j=i+1$.  The following elementary lemma is proved (with
different notation) in~\cite{bib:TroMoo}.

\begin{lemma}\label{lem:ac}
Let $P$ be a poset and let $S\subseteq \Inc(P)$.  Then the following
statements are equivalent.

\begin{enumerate}
\item $S$ is reversible.
\item $S$ does not contain an alternating cycle.
\item $S$ does not contain a strict alternating cycle.
\end{enumerate}
\end{lemma}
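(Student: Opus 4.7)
The plan is to prove the chain of implications $(1)\Rightarrow(2)\Rightarrow(3)\Rightarrow(1)$, the first two being easy and the third being the heart of the matter. For $(1)\Rightarrow(2)$: if a linear extension $L$ of $P$ reverses every pair in a purported alternating cycle $\{(a_i,b_i):1\le i\le k\}\subseteq S$, then $b_i<a_i$ in $L$ by reversal, while $a_i\le b_{i+1}$ in $P$ (hence in $L$) cyclically. Chaining these yields $b_1<b_2<\cdots<b_k<b_1$ in $L$, which is absurd. The implication $(2)\Rightarrow(3)$ is immediate, since every strict alternating cycle is in particular an alternating cycle.

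For $(3)\Rightarrow(1)$, I would argue contrapositively: assume $S$ is not reversible and produce a strict alternating cycle in $S$. Define a binary relation $\prec$ on $P$ by declaring $u\prec v$ iff either $u<v$ in $P$ or $(v,u)\in S$. If the transitive closure of $\prec$ were irreflexive, it would be a strict partial order extending the order of $P$, and any linear extension of that closure would simultaneously extend $P$ and reverse every pair of $S$, contradicting irreversibility. Hence there must exist a cycle $u_0\prec u_1\prec\cdots\prec u_m=u_0$ in which each step is either a $P$-relation or an $S$-reversal. Using transitivity of $P$, any run of consecutive $P$-steps may be merged into a single one, leaving $S$-steps $(a_j,b_j)=(u_{i_j},u_{i_j-1})$ for $j=1,\dots,k$ separated by (possibly trivial) $P$-chains that give $a_j\le b_{j+1}$ in $P$ cyclically. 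Thus $\{(a_j,b_j):1\le j\le k\}$ is an alternating cycle in $S$.

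To upgrade to strictness, I would take an alternating cycle in $S$ of minimum length $k$. If it fails to be strict, there must exist indices $j$ and $j'$ with $j'\notin\{j,j+1\}$ (cyclically) and $a_j\le b_{j'}$ in $P$; the case $j'=j$ is excluded because $a_j\parallel b_j$. Then the subsequence $(a_j,b_j),(a_{j'},b_{j'}),(a_{j'+1},b_{j'+1}),\dots,(a_{j-1},b_{j-1})$ is again an alternating cycle, with the chord $a_j\le b_{j'}$ serving as the closing step from $a_j$ to $b_{j'}$, and it is strictly shorter than the original. This contradicts the minimality of $k$, so the minimal cycle must already be strict. The main obstacle is precisely this last bookkeeping step: one must verify carefully that the extracted subsequence really is an alternating cycle and that its length is strictly smaller than $k$ regardless of the cyclic positions of $j$ and $j'$.
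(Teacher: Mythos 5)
Your proof is correct; the paper itself does not prove this lemma but cites Trotter and Moore, and your argument (acyclicity of the augmented relation $P\cup\{(y,x):(x,y)\in S\}$ yielding an alternating cycle when $S$ is irreversible, plus shortening a minimal alternating cycle along a chord to force strictness) is essentially the standard proof from that source. The only point worth making explicit is that the cycle you extract necessarily contains at least two $S$-steps --- a single $S$-step would give $a_1\le b_1$ in $P$, contradicting $(a_1,b_1)\in\Inc(P)$ --- so the resulting sequence genuinely satisfies the definition's requirement $k\ge2$, and likewise the shortened cycle in your minimality argument always retains at least the two pairs indexed $j$ and $j'$.
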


In many instances, the last statement of Lemma~\ref{lem:ac} is particularly
useful, since if $\{(a_i,b_i):1\le i\le k\}$ is a strict alternating
cycle, then $\{a_1,\dots,a_k\}$ and $\{b_1,\dots,b_k\}$ are
$k$-element antichains in $P$.

The following elementary proposition was first exploited
by Hiraguchi in  proving the last inequality listed in
Theorem~\ref{thm:comprehensive}.

\begin{proposition}\label{pro:gen-mixpair}
Let $P$ be a poset and let $(x,y)\in \Inc(P)$.  
Then the set 
\[
S=\{(x,u):(x,u)\in \Inc(P)\}\cup\{(v,y):(v,y)\in\Inc(P)\}
\]
is reversible.
\end{proposition}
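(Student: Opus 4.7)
The plan is to invoke Lemma~\ref{lem:ac} and show that $S$ contains no alternating cycle, which will imply that $S$ is reversible. Every pair in $S$ has first coordinate equal to $x$ or second coordinate equal to $y$; call these \emph{type A} and \emph{type B} respectively (the pair $(x,y)$, if it lies in $S$, may be assigned either type arbitrarily). Suppose for contradiction that $\{(a_i,b_i):1\le i\le k\}$ is an alternating cycle contained in $S$, so that $a_i\le b_{i+1}$ in $P$ for all $i$, indices read cyclically.

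First I would dispose of the two homogeneous cases. If every pair is of type A, then $a_i=x$ and $b_i=u_i$ with $x\parallel u_i$ for each $i$; the alternating cycle condition $a_i\le b_{i+1}$ reads $x\le u_{i+1}$, contradicting $(x,u_{i+1})\in\Inc(P)$. Dually, if every pair is of type B then $a_i=v_i$, $b_i=y$, and we obtain $v_i\le y$, contradicting $(v_i,y)\in\Inc(P)$.

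The remaining case is when both types appear. Walking around the cycle, the number of A-to-B transitions equals the number of B-to-A transitions, and each is positive. Hence some index $i$ has $(a_i,b_i)=(x,u_i)$ of type A and $(a_{i+1},b_{i+1})=(v_{i+1},y)$ of type B. The inequality $a_i\le b_{i+1}$ now becomes $x\le y$ in $P$, which directly contradicts the standing hypothesis $(x,y)\in\Inc(P)$.

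Since every case produces a contradiction, $S$ admits no alternating cycle, and Lemma~\ref{lem:ac} immediately yields that $S$ is reversible. There is no real technical obstacle here; the only point requiring a moment's thought is realizing that when the cycle mixes the two types, the cyclic structure forces at least one A-to-B transition, which is precisely where the hypothesis $(x,y)\in\Inc(P)$ is activated. The two homogeneous cases are essentially self-proving from the definitions of types A and B.
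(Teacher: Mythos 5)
Your proof is correct and follows essentially the same route as the paper: both invoke Lemma~\ref{lem:ac} and derive a contradiction with $x\parallel y$ from the cycle condition. The paper simply uses the \emph{strict} alternating cycle formulation, which makes the case analysis collapse to one line, whereas you carry out the same idea with general alternating cycles via the transition argument.
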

\begin{proof}
Since $x\parallel y$ in $P$, the set $S$ cannot contain a
strict alternating cycle.
\end{proof}

Framing dimension problems in terms of reversible sets of incomparable
pairs has been a very useful approach, and we point to the
recent papers~\cite{bib:FeTrWi} and~\cite{bib:JMMTW} as examples.

\subsection{Bipartite Posets}

In~\cite{bib:TroBog}, Trotter and Bogart defined the \textit{interval
dimension} of a poset $P$, denoted $\Idim(P)$, as 
the least positive integer $d$ for which
there are $d$ interval orders $P_1$, $P_2,\dots,P_d$ so that $x<y$ in $P$
if and only if $x<y$ in $P_i$ for $i=1,2,\dots,d$. Since a linear order
is an interval order, we always have $\Idim(P)\le\dim(P)$.  At one
extreme, it is easy to see that $\Idim(S_d)=\dim(S_d)=d$, for every
$d\ge2$. At the other extreme, the family of canonical interval orders
discussed previously show that every $d\ge1$, there is a poset with 
$\Idim(P)=1$ and $\dim(P)=d$.

In the arguments to follow, we will quickly reduce the problem
to the case where $P$ is a poset of height~$2$, and there the real
work begins. In fact, it will be useful to consider the notion
of a \textit{bipartite poset}.  This is a poset $P$ with
a partition $P=A\cup B$ where $A\subseteq\Min(P)$ and $B\subseteq
\Max(P)$. So a bipartite poset has height at most~$2$, but elements
of $P$ which are both minimal and maximal (some authors call these
elements ``loose'' points) can belong to $A$ or $B$. In fact,
an antichain can be made into a bipartite poset.  When we refer
to a subposet $Q$ of bipartite poset $P=A\cup B$, then we automatically
consider $Q= (Q\cap A)\cup (Q\cap B)$ in the bipartite form it inherits
from $P$.  Also, in the bipartite poset setting, it makes
sense to speak of the standard example $S_1=\{a_1\}\cup \{b_1\}$ with
$a_1\parallel b_1$ in $S_1$.

For the balance of this subsection, we restrict our attention to
bipartite posets.  The following two results are extracted 
from~\cite{bib:TroBog}, and we caution the reader to remember that they 
hold only in this special setting.

\begin{proposition}\label{pro:Idim-height2}
When $P=A\cup B$ is a bipartite poset,
$\Idim(P)$ is the least positive integer $d$ for which there is
a family $\cgF$ of linear extensions of $P$ so that for every
pair $(a,b)\in \Inc(P)\cap(A\times B)$, there is some $L\in\cgF$
with $a>b$ in $P$.
\end{proposition}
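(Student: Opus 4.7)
The plan is to prove the claimed equality via two inequalities. Let $d_0$ denote the least $d$ for which a family $\cgF = \{L_1,\ldots,L_d\}$ of linear extensions of $P$ reverses every $(a,b)\in\Inc(P)\cap(A\times B)$.

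First I would show $\Idim(P)\le d_0$. Given such a family $\cgF$, for each $i$ define a bipartite relation $P_i$ on $A\cup B$ by setting $a<_{P_i}b$ (with $a\in A$, $b\in B$) exactly when $a<_{L_i}b$, and keeping $A$ and $B$ antichains in $P_i$. Each $P_i$ is an interval order, because a $\mathbf{2}+\mathbf{2}$ with $a_1<_{P_i}b_1$, $a_2<_{P_i}b_2$, $a_1\parallel_{P_i}b_2$, $a_2\parallel_{P_i}b_1$ would produce the impossible cycle $a_1<_{L_i}b_1<_{L_i}a_2<_{L_i}b_2<_{L_i}a_1$. The identity $\bigcap_i P_i = P$ then follows: if $a<_P b$ with $a\in A$, $b\in B$, then $a<_{L_i}b$ for every $i$, so $a<_{P_i}b$; while if $a\parallel_P b$, the hypothesis gives some $L_i$ reversing $(a,b)$, forcing $a\not<_{P_i}b$ there.

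For the reverse inequality, I would begin with interval orders $P_1,\ldots,P_d$ realizing $\Idim(P)$, so $\bigcap_\ell P_\ell = P$ and each $P_\ell$ extends $P$. For each $\ell$ set $S_\ell = \{(a,b)\in A\times B : a\parallel_P b,\ a\not<_{P_\ell}b\}$. Producing, for each $\ell$, a linear extension $L_\ell$ of $P$ that reverses every pair in $S_\ell$ would complete the argument: any incomparable $(a,b)\in A\times B$ lies in $S_\ell$ for at least one $\ell$, since otherwise $a<_{P_\ell}b$ for every $\ell$ would force $a<_P b$.

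The main obstacle is proving each $S_\ell$ is reversible in $P$; by Lemma~\ref{lem:ac} this reduces to excluding strict alternating cycles. Suppose $(a_1,b_1),\ldots,(a_k,b_k)\subseteq S_\ell$ were such a cycle. Since $A\subseteq\Min(P)$ and $B\subseteq\Max(P)$ are disjoint, the cyclic condition $a_j\le b_{j+1}$ in $P$ gives $a_j<_P b_{j+1}$, hence $a_j<_{P_\ell}b_{j+1}$ as $P_\ell$ extends $P$. Iterating the interval-order property of $P_\ell$ (from $a_1<_{P_\ell}b_m$ and $a_m<_{P_\ell}b_{m+1}$ one gets $a_1<_{P_\ell}b_{m+1}$ or $a_m<_{P_\ell}b_m$, with the latter contradicting $(a_m,b_m)\in S_\ell$) would propagate $a_1<_{P_\ell}b_m$ for $m=2,3,\ldots,k$; combining $a_1<_{P_\ell}b_k$ with $a_k<_{P_\ell}b_1$ finally forces $a_1<_{P_\ell}b_1$ or $a_k<_{P_\ell}b_k$, each contradicting $S_\ell$ membership.
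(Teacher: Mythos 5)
Your proof is correct, and there is nothing in the paper to compare it against line by line: the paper states Proposition~\ref{pro:Idim-height2} without proof, importing it (together with Proposition~\ref{pro:Idim-bound}) from Trotter and Bogart~\cite{bib:TroBog}. Your argument is a valid, self-contained derivation using only tools the paper already has on hand. In the direction $\Idim(P)\le d_0$, the height-two orders $P_i$ you build from the linear extensions $L_i$ do the job: since $A\subseteq\Min(P)$ and $B\subseteq\Max(P)$, every comparability of $P$ runs from $A$ to $B$, so each $P_i$ extends $P$ and the intersection of the $P_i$ is exactly $P$; your four-term cycle in $L_i$ correctly rules out a $\mathbf{2}+\mathbf{2}$ in $P_i$, and Fishburn's characterization (cited in the paper) then makes each $P_i$ an interval order. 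In the converse direction, reducing to the reversibility of each miss-set $S_\ell$ and excluding strict alternating cycles via Lemma~\ref{lem:ac} is exactly the right move; the step you iterate is the standard interval-order dichotomy that $x<y$ and $u<v$ force $x<v$ or $u<y$, which follows in one line from the interval representation (compare the left endpoints of $y$ and $v$) or from $\mathbf{2}+\mathbf{2}$-freeness, and your use of it is sound because $(a_m,b_m)\in S_\ell$ forbids $a_m<_{P_\ell}b_m$. Two small points worth making explicit if you write this up: $A$ and $B$ partition $P$, so the alternating-cycle condition $a_j\le b_{j+1}$ is automatically strict, and each $P_\ell$ extends $P$ by the ``only if'' half of the definition of interval dimension; both are needed where you invoke $a_j<_{P_\ell}b_{j+1}$.
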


\begin{proposition}\label{pro:Idim-bound}
When $P=A\cup B$ is a bipartite poset,
$\Idim(P)\le\dim(P)\le 1+\Idim(P)$.
\end{proposition}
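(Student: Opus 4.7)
The first inequality $\Idim(P) \le \dim(P)$ is immediate: every linear order is an interval order (represent each point by a degenerate one-point interval), so any realizer of $P$ by $d$ linear extensions exhibits $P$ as the intersection of $d$ interval orders.

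For the bound $\dim(P) \le 1 + \Idim(P)$, my plan is to apply Proposition~\ref{pro:Idim-height2} to obtain a family $\cgF = \{L_1, \dots, L_d\}$ of $d = \Idim(P)$ linear extensions of $P$ that reverses every pair in $\Inc(P) \cap (A \times B)$, and then to augment $\cgF$ by a single carefully chosen linear extension $L_0$ so that $\cgF \cup \{L_0\}$ becomes a realizer.

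For $L_0$ I would use a linear extension in which all of $A$ is placed below all of $B$; since $A \subseteq \Min(P)$ and $B \subseteq \Max(P)$, such extensions exist and may be specified by fixing any linear orders $M_A$ on $A$ and $M_B$ on $B$. Any such $L_0$ automatically reverses every pair $(b,a) \in \Inc(P) \cap (B \times A)$, and the pairs in $\Inc(P) \cap (A \times B)$ are already handled by $\cgF$. The only remaining obligation is to reverse the incomparable pairs lying entirely within $A$ or entirely within $B$, and this forces a careful choice of $M_A$ and $M_B$.

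The key step is the observation that a pair $(a_1, a_2) \in A \times A$ is reversed by no member of $\cgF$ precisely when every $L_i$ orders $a_1$ before $a_2$. Writing $a_1 \prec_A a_2$ for this condition, the relation $\prec_A$ is the intersection of the $L_i$-orders restricted to $A$, so it is transitive and irreflexive, i.e., a strict partial order on $A$. I will extend $\prec_A$ to a linear order $<^*_A$ and take $M_A$ to be the reverse of $<^*_A$; then $a_1 \prec_A a_2$ forces $a_1 >_{M_A} a_2$, so $L_0$ reverses exactly the within-$A$ pairs missed by $\cgF$. Performing the same construction with $B$ in place of $A$ produces $M_B$, after which $\cgF \cup \{L_0\}$ is a realizer and $\dim(P) \le d + 1$ follows. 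The only real obstacle is recognizing that the pairs $\cgF$ misses inside $A$ (and inside $B$) form a partial order; once this is spotted, a single extension can flip all of them at once.
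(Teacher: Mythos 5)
Your proof is correct. Note that the paper itself gives no argument for this proposition---it is quoted as extracted from Trotter and Bogart~\cite{bib:TroBog}---and your argument is essentially the standard one: since $A\subseteq\Min(P)$ and $B\subseteq\Max(P)$, every comparability of $P$ goes from $A$ up to $B$, so any order placing $A$ entirely below $B$ is a linear extension, and the within-$A$ (resp.\ within-$B$) pairs missed by the reversing family $\cgF$ from Proposition~\ref{pro:Idim-height2} form the intersection of the $L_i$-orders on $A$ (resp.\ $B$), a strict partial order (antisymmetry uses that $\cgF$ is nonempty, which holds since $\Idim(P)\ge1$ by definition), whose reversal can be absorbed into the single extra extension $L_0$, which also reverses all pairs $(b,a)\in B\times A$. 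This yields a realizer of size $1+\Idim(P)$, and together with the observation that linear orders are interval orders it gives both inequalities.
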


In view of the two preceding propositions,
when $P=A\cup B$ is a bipartite poset, we let 
$\Inc_0(P)=\Inc(P)\cap(A\times B)$, so that when
$\Inc_0(P)\neq\emptyset$,
$\Idim(P)$ is the least positive integer for which $\Inc_0(P)$ can be
partitioned into $d$ reversible sets.  Also, borrowing from the
terminology discussed earlier, when   
$A'\subseteq A$ and $B'\subseteq Y$, 
we will say that a linear extension $L$ \textit{reverses} $A'$ with
$B'$ when $L$ reverses all pairs  $(a,b)\in\Inc_0(P)\cap(A'\times B')$.
When $A'=\{a\}$ and $L$ reverses $A'$ with
$B'$, we will just say that $L$ reverses $a$ with $B'$.
Similarly, when $B'=\{b\}$ and $L$ reverses $A'$ with $B'$, we will
just say that $L$ reverses $A'$ with $b$.  More generally, 
we will say that a family $\cgF$ of linear extensions of $P$ \textit{reverses}
$A'$ with $B'$ when for every pair $(a,b)\in\Inc_0(P)\cap(A'\times B')$,
there is some $L\in\cgF$ with $a>b$ in $L$.
For convenience, we will just say that a family
$\cgF$ which reverses $A$ with $B$ is a \textit{reversing family} for
$P$.

Our arguments for bipartite posets will make extensive use
of the following special case of Proposition~\ref{pro:gen-mixpair}.

\begin{proposition}\label{pro:bipartite-mixpair}
Let $P=A\cup B$ be a bipartite poset.  If
$(a,b)\in \Inc_0(P)$, then
there is a linear extension $L=L(a,b,P)$ of $P$ which reverses
$a$ with $B$ and $A$ with $b$.
\end{proposition}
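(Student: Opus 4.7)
The plan is to obtain this statement as an immediate specialization of Proposition~\ref{pro:gen-mixpair} applied to the incomparable pair $(a,b) \in \Inc_0(P)$. First I would unpack the terminology: saying that a linear extension $L$ reverses $a$ with $B$ means precisely that $L$ reverses every pair $(a, b') \in \Inc_0(P)$ with $b' \in B$, i.e., every pair $(a,b')$ with $b' \in B$ and $a \parallel b'$ in $P$. Similarly, reversing $A$ with $b$ means $L$ reverses every pair $(a',b) \in \Inc_0(P)$ with $a' \in A$. Since $\Inc_0(P) \subseteq \Inc(P)$, the union of these two sets of pairs is contained in
\[
S = \{(a,u) : (a,u) \in \Inc(P)\} \cup \{(v,b) : (v,b) \in \Inc(P)\},
\]
the set furnished by Proposition~\ref{pro:gen-mixpair}.

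Next I would simply invoke that proposition, which guarantees that $S$ is reversible. Hence there is a single linear extension $L = L(a,b,P)$ of $P$ reversing every pair in $S$, and in particular reversing $a$ with $B$ and $A$ with $b$, as required. The underlying non-trivial content, namely that no strict alternating cycle can be constructed from pairs all of which have first coordinate $a$ or second coordinate $b$, was already handled in the proof of Proposition~\ref{pro:gen-mixpair} via Lemma~\ref{lem:ac}.

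There is no real obstacle here: the bipartite structure only narrows the set of pairs we must reverse (from $\Inc(P)$ to $\Inc_0(P)$), which can only make the task easier. Thus this proposition should be viewed as a convenient restatement of Proposition~\ref{pro:gen-mixpair} tailored to the bipartite language introduced in this subsection, so that it can be invoked repeatedly and transparently in the main bipartite arguments to follow.
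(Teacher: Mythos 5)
Your proposal is correct and matches the paper exactly: the paper presents this proposition as a direct special case of Proposition~\ref{pro:gen-mixpair}, which is precisely your argument of noting that the pairs to be reversed form a subset of the reversible set $S$ from that proposition.
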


In the remainder of the paper, when $P=A\cup B$ is a bipartite poset, 
$(a,b)\in\Inc_0(P)$, then $L(a,b,P)$ will always denote a linear extension of 
$P$  which reverses $a$ with $B$ and $A$ with $b$.  There may be many linear
extensions which satisfying these two requirements, and in most settings,
it will not matter which one is chosen.  However, later in this
section, we will discuss a special case where we will attempt to find
a linear extension $L(a,b,P)$ which also satisfies a third requirement.

We say a subposet $Q$ of a bipartite poset $P=A\cup B$ is \textit{balanced}
when $|Q\cap A|=|Q\cap B|$.  When $m\ge 1$ and $Q$ is a balanced subposet
of $P$ with $|Q|=2m$, a labelling 
$Q=\{u_1,u_2,\dots,u_m\}\cup \{v_1,v_2,\dots,v_m\}$ of the elements
of $Q$ will be called a \textit{matching} of $Q$ when  $u_i\in A$, $v_i\in B$ and
$u_i\parallel v_i$ in $P$ for all $i=1,2,\dots,m$.

Here are two essential---although straightforward---lemmas
concerning matchings.

\begin{lemma}\label{lem:match-1}
Let $P=A\cup B$ be a bipartite poset, let $Q$ be a non-empty balanced subposet
of $P$ and let $Q=\{u_1,u_2,\dots,u_m\}\cup\{v_1,v_2,\dots,v_m\}$ be
a matching of $Q$.  Then $\Idim(P)\le m +\Idim(P-Q)$. Furthermore, if $Q$ is maximal, then $\Idim(P)\leq m$.
\end{lemma}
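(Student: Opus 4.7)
The plan is to construct a reversing family for $P$ by invoking Proposition~\ref{pro:bipartite-mixpair} once per matched pair and then extending a minimum reversing family of the leftover poset $P-Q$. Concretely, for each $i\in\{1,\dots,m\}$ the matching condition gives $(u_i,v_i)\in\Inc_0(P)$, so Proposition~\ref{pro:bipartite-mixpair} supplies a linear extension $L_i=L(u_i,v_i,P)$ of $P$ that reverses $u_i$ with $B$ and $A$ with $v_i$. Consequently, the family $\{L_1,\dots,L_m\}$ already reverses every pair $(a,b)\in\Inc_0(P)$ for which either $a\in Q\cap A=\{u_1,\dots,u_m\}$ or $b\in Q\cap B=\{v_1,\dots,v_m\}$.

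The pairs in $\Inc_0(P)$ not yet reversed are precisely those with $a\in A\setminus Q$ and $b\in B\setminus Q$, i.e., the pairs in $\Inc_0(P-Q)$. Let $\cgF'$ be a minimum reversing family for $P-Q$, so $|\cgF'|=\Idim(P-Q)$. Each $L'\in\cgF'$ extends to a linear extension $\widehat{L}'$ of $P$ (any linear extension of a subposet extends to a linear extension of the whole poset), and $\widehat{L}'$ still reverses every pair in $\Inc_0(P-Q)$ reversed by $L'$. The combined family $\{L_1,\dots,L_m\}\cup\{\widehat{L}':L'\in\cgF'\}$ is therefore a reversing family for $P$, of size $m+\Idim(P-Q)$, establishing the first inequality.

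For the furthermore clause, interpret the maximality of $Q$ as saying that no pair $(a,b)\in\Inc_0(P)$ satisfies $a\in A\setminus Q$ and $b\in B\setminus Q$; otherwise $Q\cup\{a,b\}$ would be a larger balanced subposet with a matching obtained by appending $(a,b)$ to the given one. Under this hypothesis, $\Inc_0(P-Q)=\emptyset$, so $\{L_1,\dots,L_m\}$ alone reverses $\Inc_0(P)$ and witnesses $\Idim(P)\le m$. There is no serious obstacle here: the argument is essentially a bookkeeping use of Proposition~\ref{pro:bipartite-mixpair}, and the only points requiring care are the correct reading of "maximal" and the boundary convention that an empty set of incomparable pairs needs no linear extension to reverse it.
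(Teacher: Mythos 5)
Your proof is correct and follows essentially the same route as the paper: one linear extension $L(u_j,v_j,P)$ from Proposition~\ref{pro:bipartite-mixpair} per matched pair, together with extensions to $P$ of a minimum reversing family of $P-Q$, and the same reading of maximality (no incomparable pair left between $A\setminus Q$ and $B\setminus Q$) for the furthermore clause. No issues.
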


\begin{proof}
Let $t=\Idim(P-Q)$ and let $\{M_1,M_2,\dots,M_t\}$ be a family
of linear extensions of $P-Q$ which forms a reversing family for
$P-Q$.  For each $i=1,2,\dots,t$, let $L_i$ be a linear extension
of $P$ so that the restriction of $L_i$ to $P-Q$ is $M_i$.
Then for each $j=1,2,\dots,m$, let $L_{t+j}=L(u_j,v_j,P)$.
Then $\cgF=\{L_1,L_2,\dots,L_t,L_{t+1},\dots,L_{t+m}\}$ shows
that $\Idim(P)\le m+\Idim(P-Q)$.

Furthermore, if $Q$ is maximal, then we can take $t=0$ and the initial family empty; then $\{L_1,L_2,\ldots,L_t\}$ is a realizer. 
\end{proof}

\begin{lemma}\label{lem:save-1}
Let $P=A\cup B$ be a bipartite poset, and let $m=\min\{|A|,|B|\}$. If
$m\ge2$, then $\Idim(P) < m$ unless
$P$ contains the standard example $S_m$.
\end{lemma}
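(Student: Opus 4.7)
The plan is to apply Lemma~\ref{lem:match-1} in its maximal form. Let $Q$ be any maximal matched subposet of $P$ with matching $\{(u_i,v_i):1\le i\le k\}$; then $\Idim(P)\le k$, and $k\le\min\{|A|,|B|\}=m$. Since $\Idim(P)\ge m$, we must have $k=m$, so $Q$ has exactly $2m$ elements. Without loss of generality assume $|A|=m$, in which case $Q$ must saturate $A$. Relabelling so that $A=\{u_1,\dots,u_m\}$ and writing $W=\{v_1,\dots,v_m\}\subseteq B$ with $u_i\parallel v_i$ in $P$, we obtain a matching of size $m$ between $A$ and $W$.

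It remains to show that these $2m$ matched elements form $S_m$ in $P$; equivalently, that $u_i<v_j$ for every $i\ne j$. Suppose instead that $u_k\parallel v_\ell$ in $P$ for some $k\ne\ell$, and form the reduced matching $M'=(M\setminus\{(u_k,v_k),(u_\ell,v_\ell)\})\cup\{(u_k,v_\ell)\}$ of size $m-1$, with vertex set $Q'=(A\setminus\{u_\ell\})\cup(W\setminus\{v_k\})$. If $Q'$ is itself maximal---that is, if $u_\ell$ is comparable with $v_k$ and with every element of $B\setminus W$---then Lemma~\ref{lem:match-1} gives $\Idim(P)\le m-1<m$, contradicting the hypothesis. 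Otherwise, $Q'$ can be extended by some pair $(u_\ell,v)\in\Inc_0(P)$ with $v\in\{v_k\}\cup(B\setminus W)$, and we obtain a new maximal matched subposet of size $2m$ on either the original vertex set $Q$ (when $v=v_k$) or on the modified set $A\cup(W\setminus\{v_k\})\cup\{v\}$ (when $v\in B\setminus W$). We then iterate this argument with the new maximal matched subposet in place of $Q$.

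The main obstacle is to ensure this iteration must terminate. Since there are only finitely many maximal matched subposets of $P$, the process cannot continue indefinitely, so termination must deliver either a maximal matched subposet of size strictly less than $2m$ (yielding the contradiction via Lemma~\ref{lem:match-1}) or a maximal matched subposet whose matching is induced in the bipartite incomparability graph (yielding $S_m\subseteq P$). Making this rigorous will likely require a well-chosen monotone invariant---for instance, the number of extra incomparabilities $(u_i,v_j)\in\Inc_0(P)$ with $i\ne j$ among the matched vertices---or a direct application of Proposition~\ref{pro:gen-mixpair} in the swap case $v=v_k$, where the four matched vertices $\{u_k,u_\ell,v_k,v_\ell\}$ form a four-element antichain in $P$ and one linear extension built from the pair $(u_k,v_\ell)$ may replace two of the $L(u_i,v_i,P)$ used in the proof of Lemma~\ref{lem:match-1}.
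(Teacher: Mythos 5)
Your first two paragraphs are sound: once $\Idim(P)\ge m$ is assumed, every maximal matched subposet has size $2m$ and (taking $|A|=m$) saturates $A$, and the dichotomy ``either the reduced matching $M'$ is maximal, giving $\Idim(P)\le m-1$ by Lemma~\ref{lem:match-1}, or it can be re-extended through $u_\ell$'' is correct. The genuine gap is precisely the step you flag and then wave at: termination. The observation that there are only finitely many maximal matched subposets is a non sequitur, since nothing prevents the procedure from revisiting configurations; indeed it can cycle with period two. After the swap replacing $\{(u_k,v_k),(u_\ell,v_\ell)\}$ by $\{(u_k,v_\ell),(u_\ell,v_k)\}$, the pair $(u_k,v_k)$ is a defect of the new matching, and applying your move to that defect (drop $u_\ell$ and $v_\ell$, then re-extend by $(u_\ell,v_\ell)$, which is an incomparable pair) returns the original matching. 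Your candidate invariant does not repair this: in the swap case the vertex set $A\cup W$ is unchanged, so the number of incomparable pairs $(u_i,v_j)$ with $i\ne j$ among matched vertices is exactly the same before and after the move. The fallback via Proposition~\ref{pro:gen-mixpair} also fails as stated: replacing the two extensions $L(u_k,v_k,P)$ and $L(u_\ell,v_\ell,P)$ by the single $L(u_k,v_\ell,P)$ leaves unreversed every pair $(u_\ell,b)$ with $b\parallel u_\ell$ and $b\in\{v_k\}\cup(B\setminus W)$ --- in particular $(u_\ell,v_k)$ itself in your four-antichain case. One extension can serve both $u_k$ and $u_\ell$ together with all their incomparable partners only when there is no alternating cycle $u_k\parallel b>u_\ell\parallel b'>u_k$ with $b,b'\in B$, and the hypotheses give no such guarantee. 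So the argument is incomplete at its decisive step, and the missing progress measure is the real content.

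For contrast, the paper's proof needs no exchange process: assuming $P$ does not contain $S_m$ (with $|A|=m\le|B|$), it chooses $a_m\in A$ so that no $b\in B$ satisfies $b\parallel a_m$ and $b>a_i$ for all $i<m$ --- such an $a_m$ exists, since otherwise the witnessing elements of $B$ together with $A$ would form an $S_m$ --- and then exhibits $m-1$ explicit block linear extensions, the $i$-th of which puts $a_i$ above all of $I(a_i,B)$ and $a_m$ above $I(a_i,B)\cap I(a_m,B)$; the choice of $a_m$ ensures every incomparable pair involving $a_m$ is caught by some $L_i$. That direct use of the ``no $S_m$'' hypothesis is what your iteration never gets to exploit.
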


\begin{proof}
Without loss of generality, we assume $|A|\le |B|$ and
let $A=\{a_1,a_2,\dots,a_m\}$. Clearly, we may assume
that this labelling has been done so that there is no
element $b\in B$ with $b\parallel a_m$ in $P$ and
$b>a_i$ in $P$ for each $i=1,2,\dots,m-1$.  For each $i=1,2,\dots,
m-1$, let $L_i$ be a linear extension of $P$ with the following
block structure:

\[
A-\{a_i,a_m\}< I(a_i,B)\cap I(a_m,B)<a_m<I(a_i,B)\cap U(a_m,B)<a_i<U(a_i,B).
\]
Clearly, the family $\{L_1,L_2,\dots,L_{m-1}\}$ shows that
$\Idim(P)\le m-1$.
\end{proof} 

The next lemma is more substantive and more technical in
nature.  However, this result will prove to be a key detail in our proof.
Also, it is one of two new inequalities prompted by the
general problem investigated in this paper---the second such result
will be presented in the next section.
We alert the reader that we will be discussing linear
extensions of the form $L(a,b,P)$ where we search for such
an extension which also reverses a pair $(a',b')\in \Inc_0(P)$ with
$a\neq a'$ and $b\neq b'$.  

\begin{lemma}\label{lem:two-standard}
Let $s\ge1$ and $t=5s$. Then let $P=A\cup B$ be a balanced bipartite poset with
$|P|=4t$. Suppose that
$P$ can be partitioned into two disjoint balanced subposets $T$ and $T'$, each of which is a copy of the
standard example $S_t$.  Also, let
$T=\{a_1,a_2,\dots,a_t\}\cup \{b_1,b_2,\dots,b_t\}$ and 
$T'=\{w_1,w_2,\dots,w_t\}\cup\{z_1,z_2,\dots,z_t\}$ be matchings.
If the subposet $\{a_i,b_i,w_i,z_i\}$ is not $S_2$, for
each $i=1,2,\dots,t$,  then $\Idim(P)\le 9s$.
\end{lemma}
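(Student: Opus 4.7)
My plan is to apply Lemma~\ref{lem:match-1} in its ``$Q$ maximal'' form: I would exhibit a balanced subposet $Q\subseteq P$ equipped with a perfect matching of incomparable pairs of size $9s$, chosen so that every incomparable pair of $P$ is covered by $Q$ (that is, at least one of its endpoints lies in $Q$). The maximal clause of Lemma~\ref{lem:match-1} then yields $\Idim(P)\le 9s$.

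The non-$S_2$ hypothesis on $\{a_i,b_i,w_i,z_i\}$ gives, for each $i$, that at least one of $a_i\parallel z_i$ or $w_i\parallel b_i$ holds in $P$. Accordingly I classify the indices into three types: Type~1 ($a_i\parallel z_i$ and $w_i<b_i$), Type~2 ($a_i<z_i$ and $w_i\parallel b_i$), and Type~3 (both incomparable), with sizes $n_1,n_2,n_3$ summing to $t=5s$. Next I choose disjoint subsets $I_A,I_B\subseteq\{1,\dots,t\}$ of size $s$ each, and remove the elements $A^*=\{a_i:i\in I_A\}$ and $B^*=\{b_j:j\in I_B\}$ from $P$ to form $Q$. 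Because $T\cong S_t$ forces $a_i<b_j$ whenever $i\neq j$ and because $I_A\cap I_B=\emptyset$, every pair in $A^*\times B^*$ is comparable; hence every incomparable pair of $P$ is covered by $Q$, and $|Q\cap A|=|Q\cap B|=2t-s=9s$.

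The matching on $Q$ of size $9s$ will be assembled from four disjoint families of incomparable pairs: (i) the $3s$ diagonal pairs $(a_i,b_i)$ with $i\notin I_A\cup I_B$; (ii) the $4s$ diagonal pairs $(w_i,z_i)$ with $i\notin S$ for an auxiliary $s$-subset $S\subseteq\{1,\dots,t\}$ to be specified; (iii) $s$ cross pairs $(a_i,z_{\phi(i)})$ for $i\in I_B$, using a bijection $\phi\colon I_B\to S$ satisfying $a_i\parallel z_{\phi(i)}$ in $P$; and (iv) $s$ cross pairs $(w_{\psi(j)},b_j)$ for $j\in I_A$, using a bijection $\psi\colon I_A\to S$ satisfying $w_{\psi(j)}\parallel b_j$ in $P$. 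A quick bookkeeping check confirms that the four families are pairwise disjoint and together saturate $Q$.

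The hardest part is producing $I_A$, $I_B$, $S$, $\phi$, and $\psi$ with the stated properties; this is where the non-$S_2$ classification earns its keep. In the generic case where both $n_2+n_3\ge s$ and $n_1+n_3\ge s$, I would try to pick $I_A$ inside the Type~2$\,\cup\,$Type~3 indices and $I_B$ inside the Type~1$\,\cup\,$Type~3 indices (keeping them disjoint), then invoke Hall's theorem on the bipartite incomparability graphs between $I_A$ and candidate $\psi$-images, and between $I_B$ and candidate $\phi$-images, to find the bijections into a common image $S$; the ``diagonal'' cross-incomparabilities supplied by the type classification provide enough edges in these graphs. The boundary cases, when $n_1$ or $n_2$ is large (so $n_3$ together with the complementary count must carry the argument), admit a direct dual construction, possibly swapping the roles of the $A$-side and the $B$-side or routing through Type~3 indices. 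Once $Q$ and its matching are in place, Lemma~\ref{lem:match-1} (maximal form) delivers $\Idim(P)\le9s$.
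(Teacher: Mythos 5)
Your reduction to the ``maximal'' clause of Lemma~\ref{lem:match-1} is sound in form, and the bookkeeping of the four families (i)--(iv) does add up to a perfect matching of $Q$ of size $9s$ \emph{if} the objects $I_A,I_B,S,\phi,\psi$ exist. But that existence is exactly where the argument breaks: the non-$S_2$ hypothesis only guarantees \emph{same-index} incomparabilities ($a_i\parallel z_i$ or $w_i\parallel b_i$), while your families (iii) and (iv) need $s$ incomparabilities of the form $a_i\parallel z_{\phi(i)}$ and $s$ of the form $w_{\psi(j)}\parallel b_j$ whose $T'$-indices form a \emph{common} set $S$. Off-diagonal incomparabilities between $T$ and $T'$ need not exist at all. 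Concretely, take the bipartite poset with $a_i<b_j$, $w_i<z_j$, $a_i<z_j$ and $w_i<b_j$ exactly when $i\neq j$ (so each $\{a_i,b_i,w_i,z_i\}$ is a $4$-element antichain, satisfying the hypothesis). Here $a_i\parallel z_j$ and $w_i\parallel b_j$ hold only for $i=j$, so $\phi$ and $\psi$ are forced to be identity maps, which forces $S=I_B$ and $S=I_A$ simultaneously, contradicting $I_A\cap I_B=\emptyset$, $s\ge1$. So your construction of $Q$ cannot be carried out, and the Hall-type argument you sketch has no edges to work with off the diagonal.

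Worse, no repair within your framework is possible for this example: its incomparability graph is a disjoint union of $t$ copies of $K_{2,2}$ (on $\{a_i,w_i\}\times\{b_i,z_i\}$), so \emph{every} maximal matching has exactly $2t=10s$ edges, and the maximal clause of Lemma~\ref{lem:match-1} can never certify a bound below $10s$. The lemma is nevertheless true there ($\Idim=5s$), but only because one linear extension can reverse two matched pairs at once --- e.g.\ a linear extension of the form $L(a_i,b_i,\cdot)$ chosen so that it also reverses $(w_i,z_i)$. This ``two-for-one'' use of extensions is the engine of the paper's proof (first on cross pairs counted by $s_1$, then via an auxiliary graph and a maximal matching in it, with a separate argument for the leftover vertices), and it is precisely what a single application of the maximal-matching bound cannot capture. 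To fix your write-up you would have to abandon the pure maximal-matching strategy and build reversing families in which some extensions handle two vertical pairs simultaneously, along the lines of the paper's case analysis.
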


\begin{proof}
Set $A_0=\{a_1,a_2,\dots,a_t\}$ and $W_0=\{w_1,w_2,\dots,w_t\}$ so
that $A=A_0\cup W_0$.  Also, set $B_0=\{b_1,b_2,\dots,b_t\}$ and
$Z_0=\{z_1,z_2,\dots,z_t\}$ so that $B=B_0\cup Z_0$.

Before proceeding with the proof, we pause to make two comments.
First, we have the trivial upper bound $\Idim(P)\le |A|= 10s$, so the
purpose of the lemma is just to lower this upper bound down to
$9s$.  Second, when a family $\cgF$ of linear extensions of $P$ 
is reversing, it must reverse the ``vertical'' matched pairs 
$\{(a_i,b_i):1\le i\le t\} \cup\{(w_i,z_i):1\le i\le t\}$ as well 
as the ``diagonal'' pairs in 
\[
\bigl(\Inc_0(P)\cap(A_0\times Z_0)\bigr)\cup 
\bigl(\Inc_0(P)\cap(W_0\times B_0)\bigr).
\]

Let $s_1$ be the largest non-negative integer for which there are
sets $C=\{i_1,\dots,i_{s_1}\}$ and $D=\{j_1,\dots,j_{s_1}\}$,
each of which are $s_1$-element subsets of $\{1,2,\dots,t\}$ so that
for each  $k=1,2,\dots,s_1$, $a_{i_k}\parallel z_{j_k}$ and
$b_{i_k}\parallel w_{j_k}$ in $P$.  Note that we may have
$s_1=0$.   Regardless, we show that $\Idim(P)\le 10s-s_1$.
There is nothing to prove if $s_1=0$, so we consider the case
where $s_1>0$.  Let $Q_1$ be the following subposet of $P$:

\[
Q_1=\cup\bigl\{\{a_{i_k},b_{i_k},w_{j_k},z_{j_k}\}:1\le k\le s_1\bigr\}.
\]

By Lemma~\ref{lem:match-1}, we have 
$\Idim(P)\le 10s -2s_1+\Idim(Q_1)$. To show that $\Idim(P)\le 10s-s_1$,
we need only show that $\Idim(Q_1)\le s_1$.  However, this follows
from the fact that for every $k=1,2,\dots,s_1$, there is a linear extension
$L(a_{i_1},b_{i_1},Q_1)$ which also reverses $(w_{i_1},z_{i_1})$.

If $s_1\ge s$, then it follows that $\Idim(P)\le 9s$.  So for the
balance of the argument, we will assume that $s_1<s$. 

Let $E=C\cup D$. Then $|E| < 2s$, so we may assume that
after a relabelling of subscripts that $E\cap \{1,2,\dots,3s\}=\emptyset$.
In particular, this implies that for all $i,j=1,2,\dots,3s$, if
$a_i\parallel z_j$ in $P$, then $w_j<b_i$ in $P$.  Also, if 
$w_j\parallel b_i$ in $P$, then $a_i<z_j$ in $P$.  In particular, it
implies that for each $i=1,2,\dots,3s$, exactly one of the following
two statements applies:

\begin{enumerate}
\item $a_i\parallel z_i$ and $w_i<b_i$ in $P$.
\item $w_i\parallel b_i$ and $a_i<z_i$ in $P$.
\end{enumerate}

Let $Q_2$ denote the subposet determined by all elements with 
subscripts (after the relabelling) in the range
$1\le i\le 3s$.  To complete the proof of the lemma,
we need only show that $\Idim(Q_2)\le 5s$.  

We define an auxiliary directed graph $G$ whose vertex set is
$\{1,2,\dots,3s\}$.  In $G$ we have a directed edge $(i,j)$
when $i$ and $j$ are distinct integers in
$\{1,2,\dots,3s\}$ and one of the following two conditions
applies:

\begin{enumerate}
\item $a_i\parallel z_j$ and $b_i\parallel w_i$ in $P$.
\item $b_i\parallel w_j$ and $a_i\parallel z_i$ in $P$.
\end{enumerate}
We note that these conditions are mutually exclusive. We also note that we
may have edges $(i,j)$ and $(j,i)$ simultaneously.

Now let $M$ be a maximal matching in the auxiliary
graph $G$ and suppose that $M$ consists of $r$ edges, where
$0\le r\le 3s/2$.  Let $Q_3$ be the subposet of $Q_2$ determined 
by the elements whose subscripts are in the maximal matching $M$.
Using Lemma~\ref{lem:match-1}, and that $|Q_3|=4r$, we have $\Idim(Q_2)\le 6s-4r+\Idim(Q_3)$.  

We show that $\Idim(Q_3)\le 3r$. There is nothing to prove if
$r=0$ and $Q_3=\emptyset$, so we assume $r>0$.
We construct a family $\cgF_3$ of linear extensions of $Q_3$ by
the following rule: For each edge $(i,j)$ in the matching $M$, we add
to $\cgF_3$ three linear extensions determined as follows:

\begin{enumerate}
\item If the first condition above applies, i.e.~$a_i\parallel z_j$ and $b_i\parallel w_i$ in
$Q_2$, then we add $L(a_i,z_j,Q_2)$, $L(w_i,b_i,Q_2)$ and $L(a_j,b_j,Q_2)$
to $\cgF_3$.
\item If the second condition above applies, i.e.~$b_i\parallel w_j$ and $a_i\parallel z_i$ in
$Q_2$, then we add $L(w_j,b_i,Q_2)$, $L(a_i,z_i,Q_2)$ and $L(a_j,b_j,Q_2)$
to $\cgF_3$.
\end{enumerate}

We claim that $\cgF_3$ is reversing for $Q_3$. Indeed, if $(i,j)$ is in $M$, then any incomparable pair involving $a_i$, $a_j$, $b_i$, or $b_j$ are reversed, and depending on which condition is satisfied, either $z_i$ and $w_j$, or $z_j$ and $w_i$ are reversed with all elements they are incomparable with. Now it is elementary to verify that all vertical and diagonal pairs are reversed in $\cgF_3$.

Furthermore,
$|\cgF_3|=3r$.  If $r\ge s$, this implies that 
\[
\Idim(Q_2)\le6s-4r+\Idim(Q_3)\le (6s-4r)+3r=6s-r\le 5s.
\]
Accordingly, we may assume that $r<s$.  

Now let $Q_4=Q_2-Q_3$, i.e., $Q_4$ is the subposet of $Q_2$ determined
by elements whose subscripts are \textit{not} in the maximal matching $M$.

Then $\Idim(Q_2)\le 4r+\Idim(Q_4)$.  Again, we note that this
inequality holds even if $r=0$.
We build a family $\cgF_4$ of linear extensions of 
$Q_4$ as follows.  For each vertex
$i$ which is not covered by the maximal matching $M$, we
add $L(a_i,z_i,Q_4)$ to $\cgF_4$ if $a_i\parallel z_i$ in $Q_4$. On the
other hand, if $a_i<z_i$ in $Q_4$, then $w_i\parallel b_i$ in $Q_4$ and
in this case, we add $L(w_i,b_i,Q_4)$ to $\cgF_4$.  

We now show that $\cgF_4$ is a reversing
family for $Q_4$. First note that the vertical pair $(a_i,b_i)$ gets reversed, because either $L(a_i,z_i,Q_4)\in\cgF_4$ or $L(w_i,b_i,Q_4)\in\cgF_4$. Similarly the vertical pairs $(w_i,b_i)$ are reversed. Now
consider a diagonal pair 
$(u,v)\in\Inc_0(Q_4)$. There are integers $i$ and $j$ 
with $1\le i,j\le 3s$, for which $u\in\{a_i,w_i\}$ and
$v\in\{z_j,b_j\}$.  We may assume $i\neq j$, for otherwise $L(u,v,Q_4)\in\cgF_4$. 
If $u=a_i$ and $v=z_j$, then the
maximality of $M$ implies
that $G$ has neither an $(i,j)$ nor a $(j,i)$ edge, so
$a_i\parallel z_j$ implies $w_i<b_i$, and if also $a_i<z_i$,
then $\{w_i,b_i,a_i,z_i\}$ would form an $S_2$.
Hence $a_i\parallel z_i$ in $Q_4$. (Symmetric argument shows
$a_j\parallel z_j$ in $Q_4$.) So $(u,v)$ is reversed in 
$L(a_i,z_i,Q_4)$ and in $L(a_j,z_j,Q_4)$, both of which belong
to $\cgF_4$. The case, when $u=w_i$ and $v=b_j$ is handled similarly.

This completes the proof that $\cgF_4$ is a reversing family
for $Q_4$.  Furthermore, it is clear that $|\cgF_4|=3s-2r$, so that 
\[
\Idim(Q_2)\le 4r+\Idim(Q_4)\le 4r+(3s-2r)= 3s+2r < 5s.
\]

This completes the proof.
\end{proof}

As we bring this subsection to a close, we remind the reader that
we will no longer be restricting our attention to bipartite posets.

\subsection{A New Inequality}

The following lemma will be an essential tool for reducing the
problem to posets of height~$2$.  Its formulation
was motivated entirely by the problem at hand; however, the ideas behind
the proof are a relatively straightforward extension of techniques first
introduced by Trotter and Monroe in~\cite{bib:TroMon}.

\begin{lemma}\label{lem:TroMon}
Let $A$ be a maximal antichain in a poset $P$ which is
not an antichain. If $X=D(A)$ and $Y=U(A)$ are both
antichains, $|X|=s$ and $|Y|=s+t$ where $s,t\ge 0$, 
then $\dim(P)\le 1+t+\lceil 4s/3\rceil$.
\end{lemma}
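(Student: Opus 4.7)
The first observation is structural: both $X$ and $Y$ consist of extremal elements of $P$. Indeed, if $z<x$ in $P$ with $x\in X$, then $z\notin A$ (else $z<x<a$ for some $a\in A$, violating that $A$ is an antichain), $z\notin X$ (since $X$ is an antichain), and $z\notin Y$ (else $a'<z<x<a$ for $a,a'\in A$, again contradicting that $A$ is an antichain). Thus $X\subseteq\Min(P)$, and dually $Y\subseteq\Max(P)$. In particular, each $x\in X$ can be placed anywhere below $U(x,P)$ in a linear extension, and symmetrically for each $y\in Y$.

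My plan is to build a reversing family $\cgF$ of linear extensions of size $1+t+\lceil 4s/3\rceil$. Reserve one extension $L_0$ to reverse all incomparable pairs with both endpoints in $A$. Next, fix a $t$-subset $Y_1\subseteq Y$ and use one extension per $y\in Y_1$ that places $y$ low, thereby reversing all pairs involving that $y$. These contribute $t$ extensions and leave unreversed only the pairs in the sets $T_x=\{(x,u)\in\Inc(P)\}$ for $x\in X$ and $S_y=\{(v,y)\in\Inc(P)\}$ for $y\in Y_0:=Y-Y_1$, where $|X|=|Y_0|=s$. The remaining task is to reverse $\bigcup_{x\in X} T_x \cup \bigcup_{y\in Y_0} S_y$ using at most $\lceil 4s/3\rceil$ extensions.

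By Proposition~\ref{pro:gen-mixpair} each $T_x$ and each $S_y$ is individually reversible, and $T_x\cup S_y$ is reversible whenever $x\parallel y$ in $P$. By Lemma~\ref{lem:ac}, one can group these $2s$ reversible sets into fewer reversible unions so long as no strict alternating cycle is created. Following the technique of Trotter and Monroe~\cite{bib:TroMon}, I would partition $X\cup Y_0$ into at most $\lceil 4s/3\rceil$ groups, each corresponding to a single linear extension, by taking a maximum matching in the bipartite incomparability graph $H$ on $X\cup Y_0$ (edges $xy$ with $x\parallel y$ in $P$) and then augmenting it with compatible \emph{triples}. A matched pair saves one extension, while a triple---a reversible union of three of the $T_x$'s and $S_y$'s---saves two extensions; balancing these savings produces the coefficient $4/3$.

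I expect the main obstacle to be the final structural step: showing that compatible triples always exist in sufficient quantity when the matching is small. The rigidity should come from the fact that, since $X\subseteq\Min(P)$ and $Y\subseteq\Max(P)$, the incomparability $x\parallel y$ fails precisely when $x<y$ in $P$; a scarcity of incomparabilities between $X$ and $Y_0$ therefore forces a dense system of chains routed through $A$, and this structure must be exploited to locate triples of $T$'s and $S$'s whose union avoids every strict alternating cycle, yielding the promised $\lceil 4s/3\rceil$ bound.
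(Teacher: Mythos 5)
Your reductions are fine as far as they go: the observations that $X\subseteq\Min(P)$, $Y\subseteq\Max(P)$, that $x\parallel y$ fails exactly when $x<y$, and that $t$ extra extensions (equivalently, $t$ applications of Hiraguchi's removal inequality, which is how the paper does it via a minimal counterexample) dispose of the surplus of $Y$, all match the paper's first steps, modulo small imprecisions (an extension placing $y$ low reverses only pairs of the form $(u,y)$, not $(y,u)$; a single $L_0$ cannot reverse both orientations of pairs inside $A$; pairs inside $X$ and inside $Y_0$ also need both orders). But the heart of the lemma---getting $\lceil 4s/3\rceil$ rather than $2s$ for the sets $T_x$ and $S_y$---is exactly what you leave open, and the mechanism you propose for it would not work. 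After the reductions, the critical configuration is the one the paper isolates: $t=0$ and $x<y$ in $P$ for \emph{every} $x\in X$, $y\in Y$ (any incomparable pair $(x,y)\in X\times Y$ is removed at cost $1$ by Theorem~\ref{thm:comprehensive}(7)). In that case your bipartite incomparability graph $H$ on $X\cup Y_0$ has no edges, so the maximum matching contributes nothing; worse, whole-set unions cannot be merged at all: if $x<y$ and some $a\in A$ satisfies $a\parallel x$ and $a\parallel y$, then $(x,a),(a,y)$ is an alternating cycle inside $T_x\cup S_y$, so even this union is not reversible, let alone a ``reversible triple'' of the $T$'s and $S$'s. Hence grouping the $2s$ sets into reversible unions can only yield about $2s$ extensions in the hard case, and no balancing of ``pairs save one, triples save two'' produces $4/3$.

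The actual source of the $4/3$ in the paper is different in kind: the sets $T_x$ and $S_y$ are \emph{split} across extensions rather than unioned. For each group of three elements of $X$ and three of $Y$, the paper writes down four linear extensions with a fixed block structure (the three $x$'s at the top of $X$, the three $y$'s at the bottom of $Y$, in four carefully chosen internal orders), and then each individual $a\in A$ is inserted into the seven blocks by a case analysis on which of the three $y$'s it lies below and which of the three $x$'s it lies above; the pairs $(x,a)$ and $(a,y)$ attached to a single $x$ or $y$ are thereby distributed over several of the four extensions. That per-element-of-$A$ distribution is the missing idea, and it is precisely the ``final structural step'' you flag as an expected obstacle; without it the bound you can certify is $1+t+2s$ (or $1+t+s$ when $X$--$Y_0$ incomparabilities happen to be plentiful), not $1+t+\lceil 4s/3\rceil$.
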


\begin{proof} 
We argue by contradiction.  First, we assume the lemma is false, and
let $P$ be a counterexample with $|P|$ as small as possible.
Suppose first that $s=|X|=0$.  Then $A=\Min(P)$.
Furthermore, since $P$ is not
an antichain, $U(A)\neq\emptyset$.  Then, by Theorem~\ref{thm:comprehensive} (3),
$\dim(P)\le 1+\width(Y)\le 1+t$.  The contradiction shows that $s>0$.

Now suppose that $t>0$. Let $y\in Y$ and consider the poset
$Q=P-\{y\}$. Since $P$ is a minimum size counterexample, we know
that $\dim(Q)\le t+\lceil4s/3\rceil$, but this implies that
$\dim(P)\le 1+t+\lceil4s/3\rceil$.  The contradiction forces $t=0$.

Now suppose $s=1$.  Then $|P-A|=2$ so by Theorem~\ref{thm:comprehensive} (4), $\dim(P)\le 2< 1+\lceil 4/3\rceil = 3$.
The contradiction shows $s>1$.  Now suppose $s=2$.  Then
$|P-A|=4$, so $\dim(P)\le 4=1+\lceil 8/3\rceil$. 
The contradiction shows $s\ge 3$.

Next, we observe that we must have $x<y$ in $P$ for all $x\in X$ and
$y\in Y$.  For if there was an incomparable pair
$(x,y)\in X\times Y$, we could remove $x$ and $y$ and decrease the
dimension by at most~$1$. This would again produce a counterexample of
smaller size.

We now attempt to construct a realizer $\cgR$ of $P$ with 
$|\cgR|=1+\lceil 4s/3\rceil$.  We first consider the
case where $s\equiv0 \mod3$, as the other two residue classes are easy 
modifications of this base case.   Furthermore, the first non-trivial
case is $s=3$.

Label the elements of $X$ as $\{x_1,\dots,x_s\}$ and
the elements of $Y$ as $\{y_1,\dots,y_s\}$.  
Set $r=s/3$.  For each $j=1,2,\dots,r$, we construct 
four linear extensions $L_{4j-3}$, $L_{4j-2}$,
$L_{4j-1}$ and $L_{4j}$.  These four extensions will
focus on the elements of 
\[
\{x_{3j-2},x_{3j-1},x_{3j}\}\cup \{y_{3j-2}, y_{3j-1},y_{3j}\}.
\]
In each of the four
extensions, $x_{3j-2}$, $x_{3j-1}$ and $x_{3j}$ will be
the three highest elements of $X$. Also, $y_{3j-2}$,
$y_{3j-1}$ and $y_{3j}$ will be the three lowest elements
of $Y$.  Furthermore, the restriction of the four
extensions to these six elements will be:

\begin{align*}
&x_{3j-1} < x_{3j} < x_{3j-2} < y_{3j-2} < y_{3j-1} < y_{3j} \text{  in $L_{4j-3}$}.\\
&x_{3j} < x_{3j-1} < x_{3j-2} < y_{3j-1} < y_{3j-2} < y_{3j} \text{  in $L_{4j-2}$}.\\
&x_{3j-2} < x_{3j} < x_{3j-1} < y_{3j} < y_{3j-2} < y_{3j-1} \text{  in $L_{4j-1}$}.\\
&x_{3j-2} < x_{3j-1} < x_{3j} < y_{3j} < y_{3j-1} < y_{3j-2} \text{  in $L_{4j}$}.
\end{align*}

In each of these four extensions, we have seven places (blocks) into
which elements of $A$ can be placed.  But recall that our goal
is to reverse pairs of the form $(a,y)$ where
$a\in A$ and $y\in\{y_{3j-2},y_{3j-1},y_{3j}\}$ as well
as pairs of the form $(x,a)$ where $x\in\{x_{3j-2},x_{3j-1},x_{3j}\}$.
So in the discussion to follow, when we refer to an incomparable
pair $(a,y)$, we intend that $a\in A$ and $y\in\{y_{3j-2},y_{3j-1},y_{3j}\}$.
An analogous remark applies to pairs $(x,a)$.

First, let  $a\in A$ and suppose that by placing $a$ in the highest
possible block in $L_{4j-3}$, we have succeeded in reversing
all pairs (if any) of the form $(a,y)$.
Then $a$ can be pushed down into the lowest 
possible blocks in $L_{4j-2}$, $L_{4j-1}$ and $L_{4j}$ and
we will certainly reverse all pairs of the form $(x,a)$.
An analogous statement holds if we could put $a$ in the highest possible
block in $L_{4j-2}$ and reverse all pairs (if any) of the form
$(a,y)$. 

Dually, suppose that by placing $a$ as low as possible in 
$L_{4j}$, we have succeeded in reversing all pairs $(x,a)$.  Then
we could push $a$ up in $L_{4j-3}$, $L_{4j-2}$ and $L_{4j-1}$ and we
will have certainly reversed all pairs of the form $(a,y)$. 
An analogous statement holds if we could put $a$ in the lowest possible
block in $L_{4j-1}$ and reverse all pairs (if any) of the form
$(x,a)$.

There are three cases left to consider:
\begin{enumerate}
\item $a\parallel y_{3j-2}$ in $P$, $a\parallel y_{3j}$ in $P$ and
$a<y_{3j-1}$ in $P$.
\item $a\parallel y_{3j-1}$ in $P$, $a\parallel y_{3j}$ in $P$ and
$a<y_{3j-2}$ in $P$.
\item $a < y_{3j-2}$ in $P$, $a<y_{3j-1}$ in $P$ and
$a\parallel y_{3j}$ in $P$.
\end{enumerate}

In the first case, we push $a$ up in $L_{4j-1}$ and down in the
other three linear extensions in our group.
(Note that we can not have $a\parallel x_{3j-1}$ and $a>x_{3j-2}$, because otherwise we would have pushed down $a$ in $L_{4j-1}$ or $L_{4j}$ as explained above. So $a\parallel x_{3j-1}$ implies $a\parallel x_{3j-2}$
and so it will be pushed under $x_{3j-1}$ in $L_{4j-2}$.)
In the second,
we push $a$ up in $L_{4j}$ and down in the other three.
Finally, in the third case, we push $a$ up in $L_{4j}$ and
down in the other three.
%

These remarks complete the proof in the case when $s\equiv 0\mod 3$.
When $s\equiv 1 \mod3$ and $s=3r+1$, we  note that $\lceil 4s/3\rceil=
4r+2$.  So to the family constructed above, we add two additional
linear extensions each having $x_{s}$ as the highest element of
$X$ and $y_s$ as the lowest element of $Y$.  Now, elements
of $A$  are pushed down in the first of the two new linear
extensions and pushed up in the second.

When $s\equiv 2\mod3$ and $s=3r+2$, we  note that $\lceil 4s/3\rceil=
4r+3$.  So to the family of size $3r$ constructed above, we add three additional
linear extensions. Each has $x_{s-1}$ and $x_s$ as the highest elements
of $X$ and $y_{s-1}$ and $y_s$ as the lowest elements of $Y$.
The first two have $x_{s-1}<x_{s}$ while the third has 
$x_s<x_{s-1}$.  However, only the first has $y_{s-1}<y_s$ with
$y_s<y_{s-1}$ in both the second and the third.
It is easy to see how to appropriately position elements of $A$
in these three new linear extensions, and these observations
complete the proof of the lemma.
\end{proof} 

We comment that when $s\ge1$, we can actually prove that 
$\dim(P)\le t+\lceil 4s/3\rceil$.  We do not include the proof as 
the technical details are formidable, and the minor improvement is 
not central to the results of this paper.  However, when $t=0$, the 
resulting inequality $\dim(P)\le\lceil 4s/3\rceil$ is tight, as evidenced 
by examples constructed in~\cite{bib:TroMon}.

\section{Proof of the Main Theorem}\label{sec:proof}

For the readers convenience, we restate here the theorem we are
about to prove:

\medskip
\noindent
\textbf{Theorem.}\quad
For every positive integer $c$, there is an integer $f(c)=O(c^2)$ so that if 
$n>10f(c)$ and $P$ is a poset with $|P|\le 2n+1$ and $\dim(P)\ge n-c$, 
then $P$ contains a standard example $S_d$ with $d\ge n-f(c)$.

\begin{proof}
Let $c$ be a positive integer.  Then set $s=41c$, $t=5s$,
and $f(c)=17ct$.  We note that $f(c)=3485c^2$.

Let $n>10f(c)$ (this bound is generous) and 
let $P$ be a poset with $|P|\le 2n+1$ and $\dim(P)\ge n-c$.  
We will show that $P$ contains a standard example $S_d$ with $d\ge
n-f(c)$.  Clearly, we may assume that $|P|=2n+1$, as otherwise
we can just add loose points. 

Let $A$ be a maximum antichain in $P$.  Since $n-c\le\dim(P)\le\width(P)$,
we know $|A|\ge n-c$.  Since $\dim(P)\le |P-A|$, we also know that
$|A|\le n+c+1$. Let $D=P-U(A)$. By Theorem~\ref{thm:comprehensive},
\[
\dim(P)\le \dim(D)+\width(U(A))\le 1+\width(D(A))+\width(U(A)),
\]  
so we may choose antichains $X\subseteq D(A)$ and $Y\subseteq U(A)$ so that 
$|X|+|Y|=n-c-1$. We observe that since $A$ is a maximal antichain in $P$, in
the subposet $P_0=A\cup X\cup Y$,  $X=D_{P_0}(A)$ and $Y=U_{P_0}(A)$.  

Without loss of generality, we may assume that $|X|\le |Y|$.  
Set $\sigma=|X|$ and $|Y|=\sigma+\tau$ where $\tau\ge 0$.  From
Lemma~\ref{lem:TroMon}, we know that $\dim(P_0)\le 
1+\tau+\lceil 4\sigma/3\rceil\le 2+\tau+4\sigma/3$.  On the other hand, since
$|A|\ge n-c$, we know that there are at most $2c+2$ points of $P$
which do not belong to $P_0$. Therefore, $\dim(P_0)\ge n-c - (2c+2)
=n-(3c+2)$.  Since $n-c-1=|X|+|Y|=2\sigma+\tau$,
so that $n=c+1+2\sigma+\tau$, it follows
that 
\[
(c+1+2\sigma+\tau)-(3c+2)\le \dim(P_0)\le 2+\tau+4\sigma/3.
\]
This implies that $2\sigma/3\le 2c+3$, so that $\sigma\le 3c+4$.

We now focus on the bipartite poset $P_1=A\cup Y$.  Since
$\dim(P_0)\ge n-(3c+2)$ and $\sigma\le 3c+4$, we know
$\dim(P_1)\ge n- (6c+6)$. 

In order to be consistent with the material developed
in the preceding section for bipartite posets, we relabel the
set $Y$ as $B$ and reuse (in the computer science tradition) 
the symbol $P$ for the bipartite poset $A\cup B$.
With its updated definition, we know $\dim(P)\ge n-(6c+6)$, so that
$\Idim(P)\ge n-(6c+7)\ge n-13c$.

Now let $d$ be the size of the largest standard example contained in $P$.
Choose a copy $T$ of $S_d$ in $P$ with minimal elements $A_0=
\{a_1,a_2,\dots,a_d\}\subseteq A$ and maximal elements $B_0=\{b_1,b_2,
\dots,b_d\}\subseteq B$. Of course, we also intend that $a_i\parallel
b_i$ in $P$ for $i=1,2,\dots,d$.  If $d\ge n-f(c)$, the conclusion
of the theorem has been established. So we will assume that
$d<n-f(c)$ and argue to a contradiction. 

Since $f(c)+1\le n-d$ and $d+|A-A_0|=|A|\ge n-c$, we see that
$|A-A_0|\ge f(c)-c+1$.
Let $A-A_0=A_1\cup A_2\cup
\dots\cup A_{16c}$ be any partition of $A-A_0$ into parts as equal in
size as division will allow.  For each $i=1,2,\dots, 16c$, let
$P_i$ now denote the bipartite subposet $A_i\cup (B-B_0)$.
If $\Idim(P_i)<|A_i|$ for each $i=1,2,\dots,16c$, then
$\Idim(P-T)\le |A-A_0|-16c$.  Since $|A|=d+|A-A_0|$ and 
$|A|\le n+c+1\le n+2c$, using Lemma~\ref{lem:match-1} we get $\Idim(P)\le n-14c$, which is false.

After a relabelling, we may assume that $\Idim(P_1)=|A_1|$.
Therefore, by Lemma~\ref{lem:save-1}, $P-T$ contains the standard example whose dimension
is $|A_1|$.  We know that $|A_1|\ge
\lfloor (f(c) -c + 1)/16c\rfloor$, so we can safely say
$|A_1|\ge f(c)/17c=t$.  Note also that $t=5s$.  Choose  
a copy $T'$ of $S_t$ contained in $P-T$ and label the elements
of $T'$ as $W_0=\{w_1,w_2,\dots,w_t\}$ and
$Z_0=\{z_1,z_2,\dots,z_t\}$ so that for all $i,j=1,2,\dots,t$,
$w_i<z_j$ in $P$ if and only if $i\neq j$.

We associate with the bipartite subposet $(A_0\cup B_0)\cup (W_0\cup Z_0)$ 
an auxiliary graph $G$ which is a bipartite graph.   
The graph $G$ has vertex set $U\cup V$, where
$U=\{u_1,\dots,u_t\}$ and $V=\{v_1,\dots,v_d\}$.
In $G$, we have an edge $u_iv_j$ when the subposet $\{w_i,z_i,a_j,b_j\}$
is \textit{not} the standard example $S_2$.

\medskip
\noindent
\textbf{Claim 1.}\quad
In the bipartite graph $G$, there is a graph matching from $U$ to $V$,
i.e., there is a $1$--$1$ function $g:U\rightarrow V$ so that
$g(u)$ is a neighbor of $u$ for every $u\in U$.

\begin{proof}
We use Hall's theorem.  For each subset $S\subseteq U$, let $N_G(S)$ be the
subset of $V$ consisting of all vertices in $V$ adjacent in $G$ to one
or more vertices in $S$.  If the claim is false, then there is 
a set $S\subseteq U$ with $|S|>|N_G(S)|$.  However, if we remove from 
$T$ all pairs of the form $\{a_i,b_i\}$ with $v_i\in N(S)$ and replace them with
the pairs $\{w_j,z_j\}$ with $u_j\in S$, we obtain a standard example whose
dimension is $d-|N(S)|+|S|$ which is larger than $d$.  
The contradiction completes the proof of the claim.
\end{proof}

Without loss of generality, we may assume that the pairs in $A_0\cup B_0$
have been labelled so that $g(u_i)=v_i$ for all $i=1,2,\dots,t$, i.e.,
the subposet $\{a_i,b_i,w_i,z_i\}$ is not $S_2$.  Then let
$Q$ denote the bipartite poset consisting of all elements of 
$(A_0\cup W_0)\cup (B_0\cup Z_0)$ with subscripts at most $t$.
Then let $q$ be the largest integer so that there
is a balanced subposet $Q'$ of $P-Q$ with $|Q'|=2q$ so that
$Q'$ admits a matching.  Then let $P'$ be the bipartite poset
formed by $Q\cup Q'$, and note that $Q\cup Q'$ is a maximal matching. Using Lemma~\ref{lem:match-1}, we have $n-13c\le \Idim(P)\le 2t+q$,
so then $(2n+1)-(4t+2q)\le 26c+1\le 27c$, and so we conclude that there are
at most $27c$ points of $P$ which do not belong to $P'$.  It follows that
$\Idim(P')\ge (n-13c)-27c=n-40c$.  

On the other hand, $\Idim(P')\le q+\Idim(Q)$. Furthermore,
since $t=5s$, we know from Lemma~\ref{lem:two-standard}
that $\Idim(Q)\le 9s=2t-s$.  It follows that

\[
(2t+q)-40c\le n-40c\le \Idim(P')\le q+(2t-s).
\]
This implies that $s\le 40c$ which is false, since $s=41c$.
The contradiction completes the proof.
\end{proof}

\section{Fractional Dimension}\label{sec:fracdim}

The concept of \textit{fractional dimension} was introduced
by Brightwell and Scheinerman in~\cite{bib:BriSch}, but we elect
to use the alternative formulation of this parameter given
by Bir\'o, Hamburger and P\'or in~\cite{bib:BiHaPo-2}.  Let 
$\{L_1,L_2,\dots,L_t\}$ be the family of \textit{all} linear
extensions of a poset $P$.  Then the fractional dimension of
$P$, denoted $\fdim(P)$ (some authors use the notation $\fd(P)$),
is the least positive real number $d$ for which there are
non-negative real numbers $\{\alpha_i:1\le i\le t\}$ so that
(1)~$\sum_{i=1}^t \alpha_i = d$; and (2)~for every pair $(x,y)$ with
$x\parallel y$ in $P$, $\sum\{\alpha_i:1\le i\le t, x>y$ in
$L_i\}\ge 1$.

For fractional dimension, we have the following 
inequalities, all due to Brightwell and Scheinerman~\cite{bib:BriSch}.

\begin{theorem}\label{thm:fd-comprehensive}
Let $P$ be poset.  Then the following inequalities hold.
\begin{enumerate}
\item $\fdim(P)\le\dim(P)$.
\item If $x\in P$, then $\fdim(P)\le 1+\fdim(P-\{x\})$.
\item If $a\in \Min(P)$, $b\in \Max(P)$ and $a\parallel b$ in $P$,
then $\fdim(P)\le 1+\fdim(P-\{a,b\})$.
\end{enumerate}
\end{theorem}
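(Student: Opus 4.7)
The plan for all three parts is to build fractional realizers by extending a (fractional) realizer of a smaller (sub)poset using one auxiliary linear extension of weight $1$. Inequality (1) is immediate: if $\{L_1,\dots,L_d\}$ is a realizer of $P$ with $d=\dim(P)$, then assigning weight $\alpha_i=1$ to each $L_i$ gives a fractional realizer of $P$ of total weight $d$.

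For (2), I would start with a minimal fractional realizer $\{(M_j,\beta_j)\}$ of $P-\{x\}$ of total weight $\fdim(P-\{x\})$. The sets $D(x,P)$, $I(x,P)$, and $U(x,P)$ appear in every linear extension of $P-\{x\}$ as three consecutive blocks in that order---if $z\in D(x,P)$ and $y\in I(x,P)$ satisfied $y<z$, then $y<z<x$ would force $y<x$, contradicting $y\parallel x$, and symmetrically between $I(x,P)$ and $U(x,P)$. Insert $x$ into each $M_j$ between the $D$ and $I$ blocks to obtain a linear extension $L_j$ of $P$ of weight $\beta_j$; these extensions inherit all reversals from the $M_j$ and, in addition, reverse every pair $(y,x)$ with $y\parallel x$ in $P$. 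The remaining pairs $(x,y)$ are reversed by a single augmenting extension $L^*$ in which $x$ is instead placed between the $I$ and $U$ blocks, assigned weight $1$; the total weight is $1+\fdim(P-\{x\})$.

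For (3), apply the same strategy to $P-\{a,b\}$. Since $a\in\Min(P)$ and $b\in\Max(P)$, each extension in a minimal fractional realizer of $P-\{a,b\}$ can be extended to $P$ by inserting $a$ at the bottom and $b$ at the top, which handles all of $\Inc(P-\{a,b\})$ together with the pairs $(y,a)$ and $(b,y)$. The remaining pairs $(a,y)$, $(y,b)$, and $(a,b)$ should be reversed by a single augmenting $L^*$ of weight $1$ in which $a$ is placed above all of $I(a,P)$ and $b$ below all of $I(b,P)$, with $a>b$ in $L^*$. The main obstacle---and the crux of the argument---is verifying that such an $L^*$ exists: partition $P-\{a,b\}$ into $D(b,P)\cap I(a,P)$, $I(a,P)\cap I(b,P)$, and $U(a,P)\cap I(b,P)$ (the class $U(a,P)\cap D(b,P)$ is empty, since $a<z<b$ would contradict $a\parallel b$), arrange these three classes consecutively in this order (a short check using the extremality of $a$ and $b$ shows no partial-order constraint is violated), and insert $b$ between the first two classes and $a$ between the last two. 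This yields a linear extension of $P$ with $a>b$ providing the needed reversals, and the total weight of the construction is $1+\fdim(P-\{a,b\})$.
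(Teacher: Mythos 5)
Parts (1) and (3) of your argument are sound: (1) is immediate, and your auxiliary extension $L^*$ in (3) is exactly Hiraguchi's mixed-pair construction (compare Proposition~\ref{pro:gen-mixpair}), so inserting $a$ at the bottom and $b$ at the top of every extension in an optimal fractional realizer of $P-\{a,b\}$ and adding $L^*$ with weight $1$ does give $\fdim(P)\le 1+\fdim(P-\{a,b\})$. (Note the paper itself gives no proof of this theorem; it quotes all three inequalities from Brightwell and Scheinerman, so your write-up is being compared against that source rather than an argument in the text.)

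Part (2), however, has a genuine gap. The claim that $D(x,P)$, $I(x,P)$ and $U(x,P)$ occur as three consecutive blocks, in that order, in \emph{every} linear extension of $P-\{x\}$ is false: your justification reasons with the order of $P$ (``$y<z<x$ would force $y<x$''), but a linear extension $M_j$ is free to place an element of $I(x,P)$ below an element of $D(x,P)$ whenever the two are incomparable in $P$. Concretely, let $P=\{x,d_1,d_2,y\}$ with $d_1<x$, $d_2<x$ and all other pairs incomparable. Assigning weight $1$ to each of $d_1<y<d_2$ and $d_2<y<d_1$ is an optimal fractional realizer of $P-\{x\}$ (total weight $2=\fdim(P-\{x\})$), and in both extensions the element $y\in I(x,P)$ lies between the two elements of $D(x,P)$, so the blocks interleave and your insertion rule is not even defined; inserting $x$ as low as a linear extension of $P$ permits places $x$ above $y$ in both, and your single augmenting extension $L^*$ also has $x>y$, so the pair $(y,x)$ receives total weight $0$ and the construction fails to be a fractional realizer. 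As written, your method only yields $\fdim(P)\le 2+\fdim(P-\{x\})$ (one further weight-one extension with $x$ placed below all of $I(x,P)$ repairs the coverage); obtaining the stated $+1$ is the real content of the inequality and requires more than taking an arbitrary optimal fractional realizer of $P-\{x\}$ and inserting $x$ low everywhere---one must choose or rebalance the realizer, or distribute the insertions of $x$ between low and high positions, as in Brightwell and Scheinerman's argument.
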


For a bipartite poset $P=A\cup B$, there is a
natural fractional dimension analogue of the inequality
$\dim(P)\le 1+\Idim(P)$. We let $\Idim^*(P)$ be the least $d$
so that there are 
non-negative real numbers $\{\alpha_i:1\le i\le t\}$ so that
(1)~$\sum_{i=1}^t \alpha_i = d$; and (2)~for every pair $(a,b)\in 
A\times B$ with $a\parallel b$ in $P$, $\sum\{\alpha_i:1\le i\le t,
a>b$ in $L_i\}\ge 1$. Clearly $\Idim^*(P)\leq \Idim(P)$; in fact $\Idim^*(P)$ may be zero.

\begin{proposition}\label{pro:fracdim}
For a bipartite poset $P=A\cup B$,
$\fdim(P)\le 2+\Idim^*(P)$.
\end{proposition}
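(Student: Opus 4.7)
The plan is to start from an optimal weighted family for $\Idim^*(P)$---which by definition covers, with total weight at least one, exactly the incomparable pairs lying in $A\times B$---and to augment it by two carefully chosen linear extensions, each carrying weight one, that take care of all the remaining incomparable pairs of $P$. The target total weight is $\Idim^*(P)+2$.

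My first observation is that since $A\subseteq\Min(P)$ and $B\subseteq\Max(P)$, both $A$ and $B$ are antichains in $P$. Hence the incomparable pairs of $P$ split into four kinds: (i) $(a,b)$ with $a\in A$, $b\in B$ and $a\parallel b$; (ii) the reversed pairs $(b,a)$ of type (i); (iii) ordered pairs $(a,a')$ with distinct $a,a'\in A$; and (iv) ordered pairs $(b,b')$ with distinct $b,b'\in B$. The $\Idim^*$-weighting already settles type (i); the augmentation must handle types (ii), (iii), (iv).

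To build the augmentation I would fix arbitrary linear orders $\pi$ on $A$ and $\sigma$ on $B$, and define a linear extension $M_1$ of $P$ by listing $A$ according to $\pi$ and then $B$ according to $\sigma$. This is a legitimate linear extension of $P$, because every comparable pair $(a,b)\in A\times B$ necessarily satisfies $a<b$ in $P$ (as $a$ is minimal and $b$ is maximal), which is consistent with placing $A$ entirely below $B$. I would then let $M_2$ be the analogous extension built from the reversed orders $\pi^{-1}$ and $\sigma^{-1}$, still with $A$ below $B$, and assign weight $1$ to each of $M_1$ and $M_2$.

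Verification is immediate: every type (ii) pair $(b,a)$ has $b>a$ in both $M_1$ and $M_2$, picking up weight $2$; each type (iii) pair $(a,a')$ is reversed in exactly one of $M_1,M_2$ (depending on whether $a>a'$ in $\pi$ or in $\pi^{-1}$), picking up weight $1$; likewise for type (iv) pairs in $B$; and type (i) pairs retain their coverage of at least $1$ from the original $\Idim^*$ weighting (both $M_1$ and $M_2$ have $a<b$ for $a\in A$, $b\in B$, so they contribute nothing here, which is fine). The total added weight is $2$, so the combined weighted family fractionally realizes $P$ with weight $\Idim^*(P)+2$, giving $\fdim(P)\le 2+\Idim^*(P)$. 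There is really no substantive obstacle here; the only point worth noting is that a single pair of mutually reversed ``$A$-below-$B$'' extensions simultaneously covers all three families of pairs outside $\Inc(P)\cap(A\times B)$.
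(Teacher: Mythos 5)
Your proof is correct and is essentially the paper's own argument: augment an optimal weighting for $\Idim^*(P)$ with two weight-one linear extensions that place $A$ below $B$ and whose restrictions to $A$ (and to $B$) are duals of one another. Your explicit case check of the four kinds of incomparable pairs is just a more detailed write-up of what the paper leaves implicit.
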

\begin{proof}
Let $\{\alpha_i:1\le i\le t\}$ be a set of non-negative
weights witnessing the value of $\Idim^*(P)$.  Then
let $L$ and $L'$ be linear extensions of $P$ with $A<B$ in $L$,
$A<B$ in $L'$, $L(A)$ is the dual of $L'(A)$ and $L(B)$ is
the dual of $L'(B)$.  Then increase the weights of $L$
and $L'$ by~$1$.  The resulting values show $\fdim(P)\le
\Idim^*(P)+2$.
\end{proof}

We will need the following trivial consequence of Lemma~\ref{lem:match-1}.

\begin{lemma}\label{lem:match-2}
Let $P=A\cup B$ be a bipartite poset, and let $Q$ be
a maximal matching with $m$ minimal and $m$ maximal elements in $P$. Then $\Idim^*(P)\leq m$.
\end{lemma}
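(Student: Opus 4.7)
The plan is to derive this statement immediately from the ``furthermore'' clause of Lemma~\ref{lem:match-1}, together with the trivial monotonicity $\Idim^*(P)\le\Idim(P)$ that is noted in the paragraph preceding Proposition~\ref{pro:fracdim}. The whole point is that the fractional interval dimension never exceeds the integer interval dimension, so any concrete reversing family of size $m$ that is produced for $\Idim(P)$ automatically certifies $\Idim^*(P)\le m$ by assigning weight~$1$ to each of its members.

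In more detail, I would proceed as follows. First, I would observe that the hypothesis ``$Q$ is a maximal matching with $m$ minimal and $m$ maximal elements'' is exactly the hypothesis of the maximality assertion in Lemma~\ref{lem:match-1}: we have a balanced subposet $Q=\{u_1,\dots,u_m\}\cup\{v_1,\dots,v_m\}$ with $u_i\in A$, $v_i\in B$, and $u_i\parallel v_i$, which cannot be enlarged to a strictly larger matching in $P$. Applying that lemma yields $\Idim(P)\le m$; concretely, the reversing family exhibited in its proof is $\{L(u_1,v_1,P),\dots,L(u_m,v_m,P)\}$.

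Next, I would convert this integer realizer into a fractional one. Enumerate \emph{all} linear extensions of $P$ as $L_1,\dots,L_t$, and assign weight $\alpha_i=1$ to each $L_i$ that coincides with one of the $m$ linear extensions $L(u_j,v_j,P)$ above, and $\alpha_i=0$ otherwise. The sum of the weights is $m$, and for every pair $(a,b)\in\Inc_0(P)$ at least one member of the family reverses $(a,b)$, so $\sum\{\alpha_i:a>b\text{ in }L_i\}\ge 1$. This is exactly the definition of $\Idim^*(P)\le m$.

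There is no real obstacle here: the only thing being used is that an integer reversing family of size $m$ is a special case of a weighted reversing family of total weight $m$. This is why the authors label the lemma trivial, and why it is stated as a one-line consequence of Lemma~\ref{lem:match-1} rather than proved from scratch.
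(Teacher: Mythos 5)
Your proposal is correct and matches the paper's intent exactly: the paper gives no separate proof, presenting the lemma as a "trivial consequence" of the maximality clause of Lemma~\ref{lem:match-1} combined with the already-noted inequality $\Idim^*(P)\le\Idim(P)$, which is precisely your argument. Your explicit conversion of the integer reversing family into a weighted one merely spells out that inequality, so there is nothing to add.
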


A trivial consequence of Lemma~\ref{lem:TroMon}
is the following version for fractional dimension.

\begin{lemma}\label{lem:TroMon-fdim}
Let $A$ be a maximal antichain in a poset $P$ which is
not an antichain.  If $X=D(A)$ and $Y=U(A)$ are antichains,
$|X|=s$ and $|Y|=s+t$ where $t\ge 0$, 
then $\fdim(P)\le 1+t+\lceil 4s/3\rceil$.
\end{lemma}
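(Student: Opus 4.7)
The plan is immediate: the fractional version follows by composing two facts already available in the paper. First, Theorem~\ref{thm:fd-comprehensive}(1) gives the universal inequality $\fdim(P)\le\dim(P)$ for every poset $P$. Second, the hypotheses of Lemma~\ref{lem:TroMon-fdim} match exactly those of Lemma~\ref{lem:TroMon}, noting that $s=|X|$ is automatically non-negative (so requiring only $t\ge 0$ is the same as requiring $s,t\ge 0$). Invoking Lemma~\ref{lem:TroMon} therefore yields $\dim(P)\le 1+t+\lceil 4s/3\rceil$, and chaining the two inequalities produces $\fdim(P)\le 1+t+\lceil 4s/3\rceil$, which is the desired bound.

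There is essentially no obstacle to overcome, which is why the author labels the statement a ``trivial consequence'' of Lemma~\ref{lem:TroMon}. All of the combinatorial content---the explicit construction of at most $1+t+\lceil 4s/3\rceil$ linear extensions reversing every relevant incomparable pair of $P$, organized into groups of four based on the residue of $s$ modulo $3$---has already been carried out in the integer-dimension setting, and fractional dimension inherits the bound for free from $\fdim\le\dim$.

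As a sanity check on the plan, one could equivalently give a direct proof from the definition of $\fdim$ by placing uniform weight $1$ on each of the linear extensions constructed in the proof of Lemma~\ref{lem:TroMon} and verifying that, for every $(x,y)\in\Inc(P)$, at least one extension in the family reverses $(x,y)$; since the family is a realizer, this condition holds trivially. That calculation merely re-runs the earlier argument, so the one-line derivation through $\fdim\le\dim$ is both the cleanest and the natural approach, and it is what I would write down.
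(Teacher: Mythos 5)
Your derivation is correct and is exactly what the paper intends: the lemma is stated as a ``trivial consequence'' of Lemma~\ref{lem:TroMon}, obtained by chaining $\fdim(P)\le\dim(P)$ from Theorem~\ref{thm:fd-comprehensive}(1) with the integer-dimension bound. No gap, and no meaningful difference from the paper's approach.
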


We next turn our attention to developing analogous versions
of Theorem~\ref{thm:main-dim} for fractional dimension.
We start with the bipartite version.

\begin{theorem}\label{thm:fdim-bipartite}
For every positive integer $c$, if $n>10(5c+12)$, and 
$P=A\cup B$ is a bipartite poset with
$|P|\le 2n+1$ and $\fdim(P)\ge n-c$,
then $P$ contains a standard example $S_d$ with $d\ge n-(5c+12)$.
\end{theorem}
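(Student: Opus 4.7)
The plan is to mirror the proof of Theorem~\ref{thm:main-dim} in the bipartite fractional setting: Proposition~\ref{pro:fracdim} and Lemma~\ref{lem:match-2} provide the fractional analogs of the key ingredients, and since $P$ is already bipartite, the initial reduction via Lemma~\ref{lem:TroMon} used in the integer case is unnecessary. First, assume $|P|=2n+1$ (adding loose points if necessary) and convert the hypothesis via Proposition~\ref{pro:fracdim} into $\Idim^*(P)\ge n-c-2$. Taking $|A|\le|B|$ without loss of generality, the bound $\Idim^*(P)\le|A|$ forces $|A|\in[n-c-2,n]$.

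Let $d$ denote the size of the largest standard example in $P$, and suppose for contradiction that $d<n-(5c+12)$. Fix $T=A_0\cup B_0\cong S_d$. The fractional analog of Lemma~\ref{lem:match-1}, which follows from the same proof with real-valued weights, yields $\Idim^*(P-T)\ge\Idim^*(P)-d\ge 4c+11$. To locate a secondary standard example $T'\cong S_t$ inside $P-T$, partition $A-A_0$ into nearly equal parts $A_1,\dots,A_k$ and consider $P_i=A_i\cup(B-B_0)$; since $\Inc_0(P-T)$ is the disjoint union of the $\Inc_0(P_i)$, one has $\Idim^*(P-T)\le\sum_i\Idim^*(P_i)$, so for a suitably chosen $k$ some part must satisfy $\Idim^*(P_i)=|A_i|$. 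Since this forces $\Idim(P_i)=|A_i|$ as well, Lemma~\ref{lem:save-1} then produces $S_{|A_i|}\subseteq P_i\subseteq P-T$, from which one extracts $T'$ of size $t=5s$ for an appropriately chosen integer $s$.

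Now comes the Hall's theorem argument of Claim~1 in the proof of Theorem~\ref{thm:main-dim}, applied to the bipartite graph between pairs of $T'$ and pairs of $T$ whose edges encode failure of the $S_2$ property: either a matching enlarges $S_d$ (contradiction), or after relabeling $\{a_i,b_i,w_i,z_i\}$ fails to be $S_2$ for $i=1,\dots,t$. Let $T_t$ denote the first $t$ pairs of $T$ and extend $T_t\cup T'$ to a maximal matching of $P$ by a maximum matching $Q'\subseteq P-(T_t\cup T')$ with $q$ matched pairs; Lemma~\ref{lem:match-2} gives $\Idim^*(P)\le 2t+q$, the fractional Lemma~\ref{lem:match-1} applied to $Q'$ gives $\Idim^*(T_t\cup T'\cup Q')\le q+\Idim(T_t\cup T')$, and Lemma~\ref{lem:two-standard} bounds the last term by $9s$. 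Combining with $\Idim^*(P)\le|P-P'|+\Idim^*(P')$ for $P'=T_t\cup T'\cup Q'$ yields a linear inequality in $s$ and $c$ that fails for a suitable $s$, giving the desired contradiction.

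The hard part will be the arithmetic bookkeeping needed to pull the constant down to exactly $5c+12$. A naive counting using only $q\ge n-c-2-2t$ yields a weaker bound on the order of $11c+O(1)$; sharpening it to $5c+12$ requires exploiting the complementary lower bound $q\ge d-t$ (the $d-t$ leftover pairs of $T$ contribute matched pairs to $Q'$), and then splitting into cases according to whether the matching constraint or the inequality $q\ge d-t$ dominates, while respecting the constraint $t\le|A-A_0|$ that governs the size of $T'$ that can actually be extracted from $P-T$.
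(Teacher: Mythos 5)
Your plan to transplant the contradiction argument of Theorem~\ref{thm:main-dim} into the fractional setting runs into a quantitative obstruction that it cannot overcome, and this is why it cannot reach a bound of the form $5c+12$ (or any bound linear in $c$). In that scheme you must produce a \emph{second} standard example $T'\cong S_t$ inside $P-T$ with $t=5s$, where the final step (Lemma~\ref{lem:match-2}, removal of the at most $O(c)$ uncovered points, and Lemma~\ref{lem:two-standard} giving $\Idim(Q)\le 2t-s$) only yields a contradiction when $s$ exceeds the accumulated slack, which here is about $3c+7$; so you need $t=\Theta(c)$. But the only mechanism you have for extracting $T'$ is the partition of $A-A_0$ into $k$ parts together with Lemma~\ref{lem:save-1}, and to make the ``every part fails'' case contradict $\Idim^*(P)\ge n-c-2$ you are forced to take $k\ge c+3$ (each failing part saves only one unit, and here note a second, smaller slip: for the real-valued parameter, $\Idim^*(P_i)<|A_i|$ alone saves nothing --- you must go through the integer statement of Lemma~\ref{lem:save-1} and $\Idim^*\le\Idim$ to get $\Idim^*(P_i)\le|A_i|-1$). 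Under the assumption $d<n-(5c+12)$ you only know $|A-A_0|\ge 4c+11$ or so, hence the part you find has $|A_i|=O(1)$, nowhere near $t=\Theta(c)$. The extraction of $T'$ only becomes possible once $d\le n-\Omega(c^2)$, so this route reproves a quadratic bound --- essentially Theorem~\ref{thm:main-dim} again --- and your proposed refinement ($q\ge d-t$, case analysis in the final bookkeeping) does not touch the step that is actually blocking: it is the size of $T'$, not the final arithmetic, that fails.

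The paper's proof is structurally different and makes no use of contradiction, Hall's theorem, or Lemma~\ref{lem:two-standard}. It works directly in the bipartite incomparability graph of $P$: it removes a maximum balanced union of cycles $Q_1$ ($2s$ points), then a maximum collection of matched quadruples $Q_2$ ($4r$ points) in the remaining acyclic graph, after which the remaining incomparabilities form an induced matching $Q_3$ of size $2d$ --- and this $Q_3$ \emph{is} the standard example $S_d$ the theorem exhibits --- with $Q_4$ the $q$ leftover points carrying no incomparable pairs. Assigning weights $1/3$ to the extensions $L(a,b,P)$ for cycle edges, $1/2$ to three extensions per quadruple, $1$ per matched pair of $Q_3$, and $1/2$ per point of $Q_4$, one checks every pair of $\Inc_0(P)$ receives total weight at least $1$, so $\Idim^*(P)\le 2s/3+3r/2+d+q/2$; combining this with $2n+1=2s+4r+2d+q$, with the maximal-matching bound $\Idim^*(P)\le s+2r+d$ of Lemma~\ref{lem:match-2}, and with $n-(c+2)\le\Idim^*(P)$ from Proposition~\ref{pro:fracdim} gives $2s/3+r\le 2c+5$, $q\le 2c+5$, and then $d\ge n-(5c+12)$ directly. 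The fractional weights are precisely what let cycles and paths be ``paid for'' at a discount, which is the source of the linear dependence on $c$; if you want to prove this theorem you should look for an argument of this direct, LP-weighting type rather than the extremal contradiction scheme of Section~\ref{sec:proof}.
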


\begin{proof}
We will assume $|P|=2n+1$.  Otherwise add loose points which
cannot decrease the fractional dimension.
In presenting the proof, we will find it convenient to use
graph theoretic terminology for the bipartite graph $G$ whose
vertex set is $A\cup B$ with $G$ containing an edge $(a,b)$
when $(a,b)\in A\times B$ and $a\parallel b$ in $P$.  In particular,
paths and cycles in $G$ will play an important role in our proof.

Next, we will identify a set  of linear extensions of $P$ and
assign positive weights to these extensions. All other linear extensions
will be assigned weight~$0$.

First, if $G$ is acyclic, set $s=0$ and $Q_1=\emptyset$.  If
$G$ is not acyclic, let $s$ be the largest integer for which
there is a balanced subposet $Q_1$ of $P$ so that $|Q_1|=2s$ and
$Q_1$ is the union of disjoint cycles.  Note that we do not
require that the cycles be induced.  For each edge $(a,b)$
which is one of the edges on one of the cycles, we choose
a linear extension $L(a,b,P)$ reversing $a$ with $B$ and
$A$ with $b$ and assign it weight~$1/3$.

We note that $P-Q_1$ is acyclic.  If $ P-Q_1$ does not contain
a path on $4$ vertices, we set $r=0$ and $Q_2=\emptyset$; otherwise,
let $r$ be the largest integer for which there
is a subposet $Q_2$ of $P-Q_1$ so that $|Q_2|=4r$ and
$Q_2$ has a matching $\{u_1,u_2,\dots,u_{2r}\}\cup
\{v_1,v_2,\dots,v_{2r}\}$ so that for each $i=1,2,\dots,r$,
$u_{2i-1}\parallel v_{2i}$ in $P$.  Note that the maximality
of $s$ implies that $u_{2i}<v_{2i-1}$ in $P$ for each $i=1,2,\dots,r$.  
As before, for each $i=1,2,\dots,r$, we choose three linear extensions
$L(u_{2i-1},v_{2i-1},P)$, $L(u_{2i},v_{2i},P)$ and $L(u_{2i-1},v_{2i},P)$,
but now we assign weight~$1/2$ to each of them.

If there are no edges in $P-(Q_1\cup Q_2)$, set $d=0$ and $Q_3=\emptyset$;
otherwise let $d$ be the largest positive integer for which there is
a balanced $2d$-element subposet $Q_3$ in $P-(Q_1 \cup Q_2)$ with
a matching $\{a_1,a_2,\dots,a_d\}\cup\{b_1,b_2,\dots,b_d\}$.  
In this case, when $d\ge2$, we note that $Q_3$ is the standard example
$S_d$.  For each $i=1,2,\dots,d$, we choose a linear extension
$L(a_i,b_i,P)$ and assign it weight~$1$.

Set $Q_4=P-(Q_1\cup Q_2\cup Q_3)$ and let $q=|Q_4|$.  We note that if $(a,b)\in 
A\times B$, with $a,b\in Q_4$, $a<b$ in $P$. If $a\in
Q_4\cap A$, choose a linear extension $L(a,B,P)$ and assign it
weight~$1/2$. Similarly, for each
$b\in Q_4\cap B$, choose a linear extension $L(A,b,P)$ and 
assign it weight~$1/2$.

Let $(a,b)\in A\times B$ be an incomparable pair, and let $w$ be the sum of weights of linear extensions in which $(a,b)$ is reversed. If $a\in Q_3$ or $b\in Q_3$, then $w\geq 1$. If $a,b\in Q_1$, then $w\geq 1/3+1/3+1/3$. If $a\in Q_1$ or $b\in Q_1$, but not both, then $w\geq 1/3+1/3+1/2$. In all other cases, we have $a,b\in Q_2\cup Q_4$, and then $w\geq 1/2+1/2$. In all cases $w\geq 1$.

Let $t$ denote the sum of all the weights we have assigned. The argument above shows that $\Idim^*(P)\leq t$.

It follows that:

\begin{equation}\label{eqn:bound0}
n-(c+2)\le \Idim^*(P)\le t= 2s/3+3r/2+d+q/2.
\end{equation}

Recall that $2n+1=|P|=|Q_1|+|Q_2|+|Q_3|+|Q_4|=2s+4r+2d+q$, so by the previous inequality,
\[
2s+4r+2d+q-1-2(c+2)=
2n-2(c+2)
\leq 4s/3+3r+2d+q,
\]
hence $2s/3+r\leq 2c+5$.

Notice that $Q_1\cup Q_2\cup Q_3$ admits a maximal matching, so by Lemma~\ref{lem:match-2}, we get
\[
n-(c+2)\leq\Idim^*(P)\leq s+2r+d.
\]
Similarly as above, from this we conclude $q\leq 2c+5$.

We return to (\ref{eqn:bound0}), and rewrite it to get
\begin{equation}\label{eqn:bound}
d\ge n - (c+2) - (2s/3+3r/2+q/2).
\end{equation}

Considering the previously proven inequalities $2s/3+r\le 2c+5$ and $q\le 2c+5$,
inequality~\ref{eqn:bound} is weakest when $s=0$, $r=2c+5$ and
$q=2c+5$.  With these values, it becomes:

\[
d\ge n - (c+2) -(4c+10)= n-(5c+12).
\]
\end{proof}
 
Next, we present the analogous version for general posets.

\begin{theorem}\label{thm:main-fdim}
For every positive integer $c$, 
if $n>10(30c+52)$ and $P$ is any poset 
with $|P|\le 2n+1$ and $\fdim(P)\ge n-c$,
then $P$ contains a standard example $S_d$ with $d\ge n-(30c+52)$
\end{theorem}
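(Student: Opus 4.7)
My plan is to follow the template of the proof of Theorem~\ref{thm:main-dim}, but to replace the intricate bipartite machinery of the preceding sections by a single appeal to Theorem~\ref{thm:fdim-bipartite}. As in the integer-dimension case, I will first assume $|P|=2n+1$, which is harmless because adding loose points does not affect $\fdim$. Fix a maximum antichain $A\subseteq P$. The chain $\fdim(P)\le\dim(P)\le\width(P)$ (Theorem~\ref{thm:fd-comprehensive}(1) and Theorem~\ref{thm:comprehensive}(1)) immediately gives $|A|\ge n-c$, and therefore $|D(A)|+|U(A)|\le n+c+1$.

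The first substantive step is to locate large antichains both below and above $A$. Using $\dim(P)\ge\fdim(P)\ge n-c$ together with $\dim(P)\le 1+\width(D(A))+\width(U(A))$ (a consequence of Theorem~\ref{thm:comprehensive}(3) and~(6)), I obtain $\width(D(A))+\width(U(A))\ge n-c-1$. After passing to the dual if necessary (both $\dim$ and $\fdim$ are dual-invariant), I may assume $\width(D(A))\le\width(U(A))$. Then I choose antichains $X\subseteq D(A)$ and $Y\subseteq U(A)$ with $|X|\le|Y|$, $|X|\le\width(D(A))$, $|Y|\le\width(U(A))$, and $|X|+|Y|=n-c-1$, and set $P_0=A\cup X\cup Y$. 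Since $|P-P_0|\le 2c+2$, iterating Theorem~\ref{thm:fd-comprehensive}(2) gives $\fdim(P_0)\ge n-3c-2$.

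In $P_0$, the sets $D_{P_0}(A)=X$ and $U_{P_0}(A)=Y$ are both antichains, so Lemma~\ref{lem:TroMon-fdim} yields $\fdim(P_0)\le 1+\tau+\lceil 4\sigma/3\rceil$ with $\sigma=|X|$ and $\tau=|Y|-\sigma$. Writing $n=c+1+2\sigma+\tau$ and combining the two bounds on $\fdim(P_0)$ gives $2\sigma/3\le 2c+3$, so $\sigma\le 3c+4$. Next I consider the bipartite poset $P_1=P_0-X=A\cup Y\subseteq P$; one more application of Theorem~\ref{thm:fd-comprehensive}(2) gives $\fdim(P_1)\ge\fdim(P_0)-\sigma\ge n-6c-6$.

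Finally, I apply Theorem~\ref{thm:fdim-bipartite} to $P_1$ with parameter $c'=6c+6$. The hypothesis $n>10(5c'+12)=300c+420$ follows from our standing assumption $n>10(30c+52)=300c+520$, so Theorem~\ref{thm:fdim-bipartite} produces a standard example $S_d\subseteq P_1\subseteq P$ with $d\ge n-(5c'+12)=n-(30c+42)$, and in particular $d\ge n-(30c+52)$, as required. The main obstacle in this plan is the bound of the first substantive step: since no clean fractional analog of Theorem~\ref{thm:comprehensive}(6) has been recorded, I must route through the integer dimension, using only that $\fdim\le\dim$, to force $\width(D(A))+\width(U(A))$ to be close to $n$ and hence make $|P-P_0|$ small enough to control the drop in $\fdim$ when restricting to $P_0$.
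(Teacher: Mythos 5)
Your proof is correct and follows essentially the same route as the paper's: pass to the maximum antichain $A$, use the width and Lemma~\ref{lem:TroMon-fdim} arguments to force $\sigma\le 3c+4$ and hence $\fdim(A\cup Y)\ge n-(6c+6)$, and finish with the bipartite fractional result. The only (cosmetic) difference is that you invoke Theorem~\ref{thm:fdim-bipartite} as a black box with $c'=6c+6$, yielding the slightly better bound $d\ge n-(30c+42)$, whereas the paper converts to $\Idim^*$ and re-enters that theorem's proof, landing on $n-(30c+52)$.
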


\begin{proof}
Using the inequalities in Theorem~\ref{thm:fd-comprehensive} and
following along lines from the proof of Theorem~\ref{thm:main-dim},
we first obtain a subposet $P_0$ consisting of an antichain $A$, which
is maximum in $P$, and two other antichains $X\subseteq D(A)$ and
$Y\subseteq U(A)$ with $\fdim(P_0)\ge n-(3c+2)$.  We now use the
inequality $\fdim(P_0)\le  2+t+4s/3$ to conclude that $s\le 3c+4$.
This implies the bipartite subposet $A\cup Y$ has fractional
dimension at least $n-(6c+6)$.  After the relabelling, we have
a bipartite poset $P=A\cup B$ with $\Idim^*(P)\ge n-(6c+8)$.

From the preceding proof, we then conclude that  $P$ contains
a standard example $S_d$ with
\[
d\ge n-(5(6c+8)+12)=n-(30c+52).
\]
\end{proof}

\section{Closing Remarks}\label{sec:close}

As commented previously, our upper bound on $f(c)$ in
Theorem~\ref{thm:main-dim} shows that $f(c)=O(c^2)$. 
For a lower bound,
consider the poset $P$ associated with a finite projective plane of order $q$, 
as discussed in Example~1 in Section~\ref{sec:intro}.  Then let $m$ be
an integer which is large relative to $q$.  Form a poset $Q$ by
adding $2m$ new points to $P$. The new points form a standard example
$S_m$.  In $Q$, all minimal elements of $S_m$ are less than all maximal
elements of $P$, and all maximal elements of $S_m$ are greater than
all minimal elements of $P$.  Set $n=m+(q^2+q+1)$ and $c=q^{3/2}$.
Then $\dim(Q)\ge n-c$.  However,
$Q$ does not contain a standard example $S_d$ with $d\ge m+q^{3/2}+2$.
Since $q^2=c^{4/3}$, it follows that $f(c)=\Omega(c^{4/3})$.

With additional work, it is quite possible that the bounds on $f(c)$
may be tightened.  Of course, in the fractional dimension setting, 
it is quite possible that with further effort, the exact answer can 
be found, especially in the bipartite case.  

There are some more modest problems associated with the
details of our proofs.  One of them is the inequality
for bipartite posets: $\fdim(P)\le 2+\Idim^*(P)$.  Is
there a constant $q<2$ so that one always has $\fdim(P)\le
q+\Idim^*(P)$? We tend to believe that this holds when
$q=4/3$.  A second problem concerns the inequality of
Lemma~\ref{lem:two-standard}.  It is quite possible that
this inequality may be strengthened. 

A third problem is find the best possible bound
in Lemma~\ref{lem:TroMon}.  It is not too difficult to show that
the dimension of $P$ is at most $t+s$, when
$t$ is sufficiently large compared to $s$, so the real problem
is to find the maximum dimension when $t$ is bounded as a function
of $s$.

\section*{Acknowledgments}

The authors thank Tam\'as Sz\H{o}nyi, Ruidong Wang, and Bartosz Walczak for helpful ideas and conversations.

\end{document}